\newtheorem{theorem}{\rm\bf Theorem}[section]
\newtheorem{proposition}[theorem]{\rm\bf Proposition}
\newtheorem{lemma}[theorem]{\rm\bf Lemma}
\newtheorem{corollary}[theorem]{\rm\bf Corollary}
\theoremstyle{definition}
\newtheorem{definition}[theorem]{\rm\bf Definition}
\theoremstyle{remark}
\newtheorem{remark}[theorem]{\rm\bf Remark}
\def\interieur#1{\mathord{\mathop{\kern 0pt #1}\limits^\circ}}
\title[Variation of extremal length]{Variation of extremal length functions on Teichm\"uller space}
\author{Lixin Liu}
\address{Lixin Liu, Department of Mathematics, Sun Yat-sen University, 510275, Guangzhou, P. R. China}
\email{mcsllx@mail.sysu.edu.cn}
\author{Weixu Su}
\address{Weixu Su, School of Mathematics and Shanghai Center of Mathematics, Fudan University, 200433, Shanghai, P. R. China}
\email{suwx@fudan.edu.cn}
\date{\today}
\begin{document}

\begin{abstract}
Extremal length is an important conformal invariant on Riemann surface.
It is  closely related to the geometry of Teichm\"uller metric on Teichm\"uller space.
By identifying extremal length functions with energy of harmonic maps from Riemann surfaces to $\mathbb{R}$-trees,
we study the second variation of extremal length functions along Weil-Petersson geodesics.
We show that the extremal length of any measured foliation is a
pluri-subharmonic function on Teichm\"uller space.
\end{abstract}

\maketitle


\noindent AMS Mathematics Subject Classification:   32G15 ; 30F30 ; 30F60.
\medskip

\noindent Keywords:   Extremal length; harmonic map; pluri-subharmonic; Teichm\"uller space.
\medskip

\maketitle

\tableofcontents
\section{Introduction}
Let $S$ be a smooth closed surface of negative Euler characteristic.
The Teichm\"uller space $\mathcal{T}(S)$ is the space of isotopy classes of marked hyperbolic structures on $S$.
The Teichm\"uller space admits a canonical complex structure and a standard distance,
called the Teichm\"uller ($=$ Kobayashi) distance.
Thanks to the works of Kerckhoff, Masur and Gardiner, etc.,
the extremal length functions, which are studied in this paper,
are recently recognized as fundamental tools for studying the geometry of the Teichm\"uller distance.

The extremal length functions are often comparable with the hyperbolic length functions, which are also fundamental in Teichm\"uller theory.
As was observed by Wolpert and other mathematicians, the hyperbolic length functions are plurisubharmonic.
Hence it was also conjectured and expected that the extremal length functions are  plurisubharmonic.
In this paper, we give an affirmative answer to this conjecture.


\subsection{Motivation and main results}
Let $\gamma$ be a fixed simple closed curve on $S$.
Denote by
$$\mathrm{Ext}_\gamma(\cdot): \mathcal{T}(S)\to \mathbb{R}_+$$
the extremal length function of $\gamma$.
The notion of extremal length is due to Ahlfors and Beurling \cite{AB}.
We will give various equivalent definitions of extremal length on \S \ref{sec:pre}.
Kerckhoff \cite{Kerckhoff} discovered a useful distance formula for the Teichm\"uller metric in terms of extremal length functions.
Estimates of extremal length along Teichm\"uller geodesics are important to understand the large scale geometry  of the Teichm\"uller metric \cite{Masur75, Minsky,Rafi, FM}.

In this paper, we initiate the study of second variation of extremal length functions on $\mathcal{T}(S)$.
Note that the first variational formula of extremal length functions along a differential path on Teichm\"uller space has been known for a long time
(see for example Gardiner \cite{Gardiner}.
A new proof  using harmonic maps was recently given by Wentworth \cite{Wentworth}).

By the work of Wolf  \cite{Wolf95,Wolf96}, for any $X\in \mathcal{T}(S)$,
$\mathrm{Ext}_\gamma(X)$ can be identified with the energy of a harmonic map from $X$ to a $\mathbb{R}$-tree.
To be more precise,
there is a $\pi_1(S)$-equivariant harmonic map from $\widetilde{X}$, the universal cover
 of $X$, to a $\mathbb{R}$-tree determined by the leaf structure of the measured foliation equivalent to $\gamma$.
The Hopf differential of such a harmonic map realizes the Hubbard-Masur differential of $\gamma$,
and the $\mathbb{R}$-tree is  dual to the vertical foliation of the Hopf differential.
 It turns out that the energy of the harmonic map (restricted on a fundamental domain of $X$) is equal to $\mathrm{Ext}_\gamma(X)$.
See \S \ref{sec:pre} for more details.

 The above observation allows us to investigate the variations of $\mathrm{Ext}_\gamma(\cdot)$ via harmonic maps.
Our main result is:

\begin{theorem}\label{theorem:WP} Let $X\in \mathcal{T}(S)$ and assume that $\mathrm{Ext}_\gamma(\cdot)$
is smooth in an open  neighborhood of $X$.
For any two Weil-Petersson geodesics $\Gamma_i(t),i=1,2$ on $\mathcal{T}(S)$ with
$\Gamma_1(0)=\Gamma_2(0)=X$ and $\frac{d}{dt}\Gamma_1(0)={\mu}$, $\frac {d}{dt}\Gamma_2(0)=i{\mu}$,
where $\mu$ is a harmonic Beltrami differential representing a tangent vector to
$\mathcal{T}(S)$ at $X$ and $i$ denotes the almost complex structure on $\mathcal{T}(S)$,
we have
$$\frac{d^2} {dt^2}|_{t=0}\mathrm{Ext}_\gamma(\Gamma_1(t))+\frac{d^2}{dt^2}|_{t=0} \mathrm{Ext}_\gamma(\Gamma_2(t))>0.$$
\end{theorem}
Note that extremal length is a conformal invariant, while the Weil-Petersson metric is defined by using hyperbolic geometry.
Hence Theorem \ref{theorem:WP} gives some analytic characterization of the extremal length functions on Teichm\"uller  space in terms of hyperbolic (Weil-Petersson) geometry.

As a corollary of the the above theorem, we  show that:

\begin{theorem}\label{theorem:main}Let $\gamma$ be a simple closed curve on $S$.
The extremal length function $\mathrm{Ext}_{\gamma}(\cdot)$ is pluri-subharmonic on $\mathcal{T}(S)$.
It is smooth and strictly pluri-subharmonic on an open dense subset of $\mathcal{T}(S)$.
\end{theorem}

Harmonic map theory has been successfully used by Wolf \cite{Wolf89}
in the study of the Thurston compactification and
the Weil-Petersson geometry (see also \cite{SW}, where Scannell and Wolf studied
infinitesimal variation of harmonic maps between singular grafted surfaces).
 Our work shows that harmonic maps may be useful to the study of extremal length functions and Teichm\"uller metric.

In contrast with extremal length, variational formulas for  hyperbolic length were throughly studied by
Kerckhoff \cite{Kerckhoff83}, Wolpert \cite{Wolpert83,Wolpert87,Wolpert08} and Wolf \cite{Wolf09}.
The convexity of hyperbolic length was used to solve the famous Nielsen realization problem \cite{Kerckhoff83,Wolpert87}.
The estimates of  Weil-Petersson gradient and  Hessian  of hyperbolic length are
important in the study of Weil-Petersson curvature expansions and geodesic flow \cite{Wolpert10, BMW}.

\subsection{Remark}
Note that Tromba has proved that the Dirichlet energy function
(with varied domain and fixed target) is strictly pluri-subharmonic on $\mathcal{\mathcal{T}}(S)$. More explicitly, fix a hyperbolic structure $g_0$ on $S$, for any  $X\in \mathcal{T}(S)$, there exists a unique harmonic map $h:X\to (S,g_0)$ homotopic to the identity map of $S$.
Denote the energy of $h$ by $D(X)$. It was shown by Tromba  \cite{Tromba,Tromba2} that $D(\cdot)$ defines a strictly pluri-subharmonic function on $\mathcal{\mathcal{T}}(S)$.

The fact that the target surface $(S,g_0)$ has constant curvature $-1$ plays an important role in Tromba's argument,
since integral of curvature form appears in the estimation of
Levi form.
While in our situation, the target is flat with singularity.
However, we can make some analogous computations by restricting the integral of energy density on a
cylinder domain, whose image under the harmonic maps is isometric to the unit interval.
Such a one-dimensional feature will simplify our computations.

On the other hand, the Dirichlet energy function, with fixed domain and varied target, is convex along
Weil-Petersson geodesics (see \cite{Wolf89, Tromba, Yamada}).
In this case, the Hessian of Dirichlet energy function realizes  the Weil-Petersson Riemannian metric.
It is then possible to develop Teichm\"uller theory in terms of harmonic maps by systematic investigation of variations of the target hyperbolic structures, see Wolf \cite{Wolf89} and Jost \cite{Jost} for references.

\subsection{Organization of the paper}

We shall recall Wolf's treatment of extremal length functions as energy of harmonic maps from Riemann surfaces to  $\mathbb{R}$-trees in \S \ref{sec:pre}.
In \S \ref{sec:second}, we obtain a formal formula for the second variation of extremal length functions  on Teichm\"uller space.
Such a formula is simplified in \S \ref{sec:field}, where we study the variational vector of harmonic maps.
The computations in \S \ref{sec:second} make sense when the extremal length function is smooth. Unfortunately,
in general, extremal lengths functions can be even not $C^2$.  Some of the regularity results are discussed in \S \ref{sec:regularity}.
Then \S \ref{sec:pluri} is devoted to the proof of Theorem \ref{theorem:WP} and Theorem \ref{theorem:main}.
Some applications are given in \S \ref{sec:application}, including a  new proof of the classical result that
Teichm\"uller space is a Stein manifold.

\bigskip

\emph{Note Added in Proof.} The first version of this paper was posted on
arXiv \cite{LS}, where we announced  that the extremal length functions of simple closed curves are  strictly plurisubharmonic.
The proof in that version was incomplete, due to the lack of smoothness (at least $C^2$) of extremal length functions.
Now we recast our result to Theorem \ref{theorem:main}.
Recently Miyachi \cite{Miyachi} showed that extremal length functions are log-plurisubharmonic, which implies our main theorems.
His argument is more directly, by giving an explicit formula for the Levi form of the norm of Hubbard-Masur differentials.

\bigskip
\emph{Acknowledgements.}  We are grateful to
Michael Wolf for his  ideas and encouragements. 
We thank  Howard Masur for communication on the proof of Lemma \ref{lem:denseopen}.
We also would like to thank the referee for careful reading of the manuscript,
with many corrections and useful comments. The first author is partially supported by NSFC No: 11271378. The second author is partially supported by NSFC No: 11671092, 11631010 and the Heidelberg Institute for Theoretical Studies.

\section{Preliminaries}\label{sec:pre}
Let $S$ be a connected smooth closed surface of genus $g>1$.
Let $\mathcal{M}_{-1}$ be the space of hyperbolic metrics
(complete Riemannian metrics of constant curvature $-1$)
on $S$  and let $\mathrm{Diff}_0$ be the group of diffeomorphisms of $S$
isotopic to the identity.
The {Teichm\"uller space} $\mathcal{T}(S)$ of $S$ is defined to be the quotient space
$\mathcal{M}_{-1}/{\mathrm{Diff}_0}$,
where ${\mathrm{Diff}_0}$ acts on $\mathcal{M}_{-1}$ by pulling back.

 Note that every hyperbolic metric on $S$ is corresponding to a conformal structure.
 A surface with a conformal structure is a {Riemann surface}.
 The Teichm\"uller space $\mathcal{T}(S)$
 is also the set of equivalence classes of marked Riemann surfaces homeomorphic to $S$.
 For further background on Teichm\"uller theory we refer to the book \cite{GL}.

Throughout the paper, we shall identify a hyperbolic metric on $S$
with its corresponding conformal structure.
A hyperbolic metric on $S$ is usually denoted by
$(S,g)$ where, in  conformal coordinates, $g$ is locally of the form $g=g(z)|d z|^2$.
Furthermore, we shall denote an element of $\mathcal{T} (S)$ by $(S,g)$,
without explicit reference to the equivalence relation.

\subsection{Extremal length}
Let $X$ be a  Riemann surface. A \emph{conformal metric} $\rho$ on $X$ is  locally of the form
$\rho(z)|dz|$ where $z$ is a locally conformal coordinate of $X$ and $\rho(z)\geq 0$ is
a  Borel measurable function.  We define the
\emph{$\rho$-area} of $X$ by
$$ \mathrm{Area}_\rho(X)=\int_X \rho^2(z)|dz|^2.$$

In the following, we shall denote by $\gamma$ an essential simple closed curve on $S$ or its free isotopy class on $S$
(essential means not homotopic to a point).
The \emph{$\rho$-length} of $\gamma$ is defined by
$$L_\rho(\gamma)=\inf_{\gamma' }\int_{\gamma'}\rho(z)|dz|,$$
where the infimum is taken over all simple closed curves $\gamma' $ in the isotopy class of  $\gamma$.

\begin{definition}
With the above notation, the {extremal length} of $\gamma$ on $X$ is defined by
\begin{equation*}\label{equ:extremal}
\mathrm{Ext}_\gamma(X)=\sup_\rho \frac{L_\rho^2(\gamma)}{\mathrm{Area}_\rho(X)},
\end{equation*}
where  $\rho(z)|dz|$ ranges over all conformal metrics on $X$  with
$$0<\mathrm{Area}_\rho(X)<\infty.$$
\end{definition}

The above definition is called the \emph{analytic definition} of extremal length.
There is a \emph{geometric definition} of $\mathrm{Ext}_\gamma(X)$:
$$\mathrm{Ext}_\gamma(X)=\inf_C \frac{1}{\mathrm{mod}(C)},$$
where the infimum is taken over all embedded cylinders $C$ in $X$ with core curves isotopic to $\gamma$
and $\mathrm{mod}(C)$ is the conformal modulus of $C$. As pointed out by Kerckhoff \cite{Kerckhoff},
in the estimation of extremal length, the analytic definition is useful for finding lower bounds, while the geometric definition is useful for finding upper bounds.

\subsection{Measured foliation and Quadratic differential}
A \emph{measured foliation}  on $S$ is a foliation  (with a finite
number of singularities) endowed with a transversely invariant  measure.
 The allowed
singularities are  topologically  the same as
those that occur at $z=0$ in the line field $z^{p-2}dz^2, p\geq 3$. The \emph{intersection number}
$i(\gamma, \mathcal{F})$
of a simple closed curve $\gamma$ with a measured foliation $\mathcal{F}$ endowed with transverse measure $\mu$
is defined by
$$i(\gamma,\mathcal{F})=\inf_{\gamma'}\int_{\gamma'}d\mu,$$
where the infimum is taken over all simple closed curves $\gamma' $ in the isotopy class of  $\gamma$.
Two measured laminations $\mathcal{F}$ and $\mathcal{F}'$ are said to be measure
equivalent if, for all simple closed curves $\gamma$ on $S$,  $i(\gamma,\mathcal{F})=i(\gamma,\mathcal{F}')$.
Denote by $\mathcal{MF}$
the space of equivalence classes of measured foliations on $S$.

Given a measured foliation, the leaves passing through a singularity are the \emph{critical leaves}.
There is a special class of measured foliations with the property that the complement of the critical leaves is
homeomorphic to a cylinder. For such a measured foliation, the leaves  on the
cylinder are all freely homotopic to some simple closed curves
$\gamma$. Then the measured foliation is completely determined as a point in
$\mathcal{MF}$ by the height $a$ of the cylinder
 and the isotopy class of $\gamma$.
 Denote such a foliation by $(\gamma, a)$ or $a\gamma$.

 Let $\mathcal{S}$ be the set of isotopy classes of essential simple closed curves on $S$.
 Thurston (see \cite{FLP}) showed that $\mathcal{MF}$ is homeomorphic to a open ball of dimension $6g-6$
and  the  embedding $\mathcal{S}\times \mathbb{R}_+\to \mathcal{MF}$  is dense in $\mathcal{MF}$.

\bigskip

A  \emph{holomorphic quadratic
differential} $\Phi$ on $(S,g)$ is a $(2,0)$-tensor locally given by $\Phi=\Phi(z)dz^2$,
where $\Phi(z)$ is holomorphic. Any holomorphic quadratic differential $\Phi=\Phi(z)dz^2$
determines a singular metric $|\Phi(z)||dz|^2$,
with finitely many singular points corresponding to the zeros of $\Phi$. The total area of $S$ in this metric is given by
$$\int_S |\Phi|= \int_S |\Phi(z)|dz|^2.$$
Let $QD(g)$ be the space of
holomorphic quadratic differentials on $(S,g)$.
There is a natural identification of $QD(g)$ with the holomorphic cotangent space of $\mathcal{T}(S)$ at $(S,g)$.

Let us describe the relation between quadratic differentials and measured foliations.
Firstly, an element $\Phi \in QD(g)$ gives rise to a pair of transverse
measured foliations $h(\Phi)$ and $v(\Phi) $ on
$S$, called the \emph{horizontal foliation} and \emph{vertical foliation} of $\Phi$, respectively. The
leaves of these foliations are given by setting the imaginary part
(resp. real part) of $\Phi$ equal to a constant. In a neighborhood of a
nonsingular point, there are natural coordinates $z = x + iy $ so
that the leaves of $h(\Phi)$ are given by $y$ = constant, and
the transverse measure of $h(\Phi)$ is $|dy|$. And the leaves of
$v(\Phi)$ are given by $x$ = constant, with transverse
measure $|dx|$. The foliations $h(\Phi)$ and
$v(\Phi) $ have zero set of $\Phi$ as their common singular
set, and at each zero of order $k$ they have a $k+2$-pronged
singularity, locally modeled on the singularity at the origin of
$z^k dz^2$.

Conversely, according to a fundamental theorem of Hubbard and Masur \cite{HM}, if $\mathcal{F}$ is a measured foliation on $(S,g)$, then
there is a unique holomorphic quadratic differential $\Phi(\mathcal{F})\in QD(g)$ such that
$\mathcal{F}$ is measure equivalent to $v(\Phi(\mathcal{F}))$, the vertical measured foliation of $\Phi(\mathcal{F})$.
$\Phi(\mathcal{F})$ is called the \emph{Hubbard-Masur differential} of $\mathcal{F}$.
A quadratic differential whose vertical foliation is measure equivalent to a $(\gamma, a)$
is called a \emph{one-cylinder Strebel differential}.


For any simple closed curve $\gamma$ on $S$ and $a>0$, we set $$\mathrm{Ext}_{a\gamma}(X)=a^2\mathrm{Ext}_\gamma(X).$$ Based on the result that $\mathcal{S}\times \mathbb{R}_+$ is dense in $\mathcal{MF}$, Kerckhoff  \cite{Kerckhoff} generalized the definition of extremal length of simple closed curves to that of measured foliations.

The following fact is due to Kerckhoff \cite{Kerckhoff}. A detailed proof can be found in Ivanov \cite{Ivanov}.

\begin{proposition}\label{pro:area} The extremal length of any measured foliation $\mathcal{F}$ on $(S,g)$ is equal to the area of
$\Phi(\mathcal{F})$, that is,
$$\mathrm{Ext}_{\mathcal{F}}(g)=\int_S|\Phi(\mathcal{F})|.$$
\end{proposition}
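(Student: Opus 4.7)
}
The plan is to reduce the statement to the case of a single simple closed curve by density and continuity, and then verify it explicitly via the one-cylinder Strebel differential. First I would argue that both sides of the claimed equality are continuous functions of $\mathcal{F}\in\mathcal{MF}$. For the left-hand side this is essentially the definition: following Kerckhoff, $\mathrm{Ext}_{\mathcal{F}}(g)$ is defined on $\mathcal{S}\times\mathbb{R}_+$ by $\mathrm{Ext}_{a\gamma}(g)=a^2\mathrm{Ext}_\gamma(g)$ and extended to $\mathcal{MF}$ by continuity, which is legitimate because the function on $\mathcal{S}\times\mathbb{R}_+$ is uniformly continuous in the $\mathcal{MF}$-topology. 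For the right-hand side, the Hubbard--Masur theorem provides a continuous (in fact homeomorphic) assignment $\mathcal{F}\mapsto \Phi(\mathcal{F})\in QD(g)$, so $\mathcal{F}\mapsto\int_S|\Phi(\mathcal{F})|$ is continuous. Since $\mathcal{S}\times\mathbb{R}_+$ is dense in $\mathcal{MF}$, it suffices to verify the equality on $(\gamma,a)$. Moreover, rescaling $\mathcal{F}$ by $a$ scales the natural coordinate $\zeta=\int\sqrt{\Phi}\,dz$ by $a$ and hence $\Phi(a\gamma)=a^2\Phi(\gamma)$, giving $\int_S|\Phi(a\gamma)|=a^2\int_S|\Phi(\gamma)|$; combined with the quadratic scaling of $\mathrm{Ext}$, we reduce to the case $a=1$.

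Next I would analyse the structure of the Hubbard--Masur differential $\Phi=\Phi(\gamma)$ whose vertical foliation is (measure-equivalent to) $\gamma$. Since $\mathcal{F}_v(\Phi)\sim\gamma$ consists of a single annular family of closed leaves homotopic to $\gamma$, $\Phi$ is a one-cylinder Strebel differential: in the natural coordinate $\zeta=x+iy$ the surface minus the critical graph is identified with a single flat cylinder
\begin{equation*}
C=\bigl\{(x,y):0<x<1,\ y\in\mathbb{R}/c\mathbb{Z}\bigr\},
\end{equation*}
where the $x$-width equals the transverse measure of the vertical foliation (namely $1$, corresponding to $\mathcal{F}=\gamma$) and $c>0$ is the $\zeta$-length of a closed vertical leaf. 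The area with respect to $|\Phi|$ is then
\begin{equation*}
\int_S|\Phi(\gamma)|=\int_C dx\,dy=c.
\end{equation*}

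It remains to show that $\mathrm{Ext}_\gamma(g)=c$, which I would do by sandwiching with the two definitions of extremal length. For the upper bound, the cylinder $C$ is conformally embedded in $X$ with core curve isotopic to $\gamma$ and modulus $1/c$, so the geometric definition yields $\mathrm{Ext}_\gamma(g)\leq c$. For the lower bound, I would test the analytic definition against the singular flat metric $\rho=|\Phi(\gamma)|^{1/2}|dz|$: its area is $\int_S|\Phi|=c$, and every simple closed curve $\gamma'$ in the isotopy class of $\gamma$ lies in $C$ (which exhausts $S$ up to a measure-zero critical graph) and is homotopic to the core, so $L_\rho(\gamma')\geq c$ by the standard length-around-a-cylinder argument. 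The analytic definition then gives $\mathrm{Ext}_\gamma(g)\geq L_\rho^2(\gamma)/\mathrm{Area}_\rho\geq c^2/c=c$.

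The main substantive input is the Hubbard--Masur theorem, both for the existence of the one-cylinder Strebel representative and for the continuity of $\mathcal{F}\mapsto\Phi(\mathcal{F})$; once these are taken as given, the only delicate point is the lower-bound argument, where one must verify that any representative of $\gamma$ has $\rho$-length at least $c$ despite the presence of the critical graph of $\Phi$. This is handled by noting that the critical graph has zero $\rho$-area and any simple closed curve homotopic to the core of a flat cylinder has length at least the circumference, an elementary fact I would invoke without further comment.
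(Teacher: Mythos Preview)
Your proof is correct and follows essentially the same approach as the paper: reduce to weighted simple closed curves by density in $\mathcal{MF}$ and continuity of both sides, then compute explicitly using the one-cylinder Strebel differential associated to $\gamma$. The only difference is cosmetic: where the paper invokes the Jenkins--Strebel theorem as a black box to get $\mathrm{Ext}_\gamma(g)=L_\rho(\gamma)/a$, you derive the same identity directly by sandwiching with the geometric definition (upper bound from the embedded cylinder of modulus $1/c$) and the analytic definition (lower bound from the flat metric $|\Phi|^{1/2}|dz|$).
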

\label{pro1}





\subsection{Realizing extremal length by energy of harmonic map}

\begin{figure}[!hbp]
\centering
\begin{tikzpicture}[scale=0.7]
\path (0,0) coordinate (origin);
\path (0:2cm) coordinate (P0);
\path (1*72:2cm) coordinate (P1);
\path (2*72:2cm) coordinate (P2);
\path (3*72:2cm) coordinate (P3);
\path (4*72:2cm) coordinate (P4);
\path (0:10cm) coordinate (Q);
\path (36:2cm) coordinate (Q0);
\path (108:2cm) coordinate (Q1);
\path (180:2cm) coordinate (Q2);
\path (252:2cm) coordinate (Q3);
\path (-36:2cm) coordinate (Q4);
\path (6:11.5cm) coordinate (R0);
\path (11:9cm) coordinate (R1);
\path (0:8cm) coordinate (R2);
\path (-13:9cm) coordinate (R3);
\path (-6:11.5cm) coordinate (R4);

\path (10:2.05cm) coordinate (V1);
\path (62:2.05cm) coordinate (V2);
\path (82:2.05cm) coordinate (V3);
\path (134:2.05cm) coordinate (V4);
\path (154:2.05cm) coordinate (V5);
\path (206:2.05cm) coordinate (V6);
\path (226:2.05cm) coordinate (V7);
\path (-82:2.05cm) coordinate (V8);
\path (-62:2.05cm) coordinate (V9);
\path (-10:2.05cm) coordinate (V10);
\draw  (origin) -- (P0) (origin) -- (P1) (origin) -- (P2)
(origin) -- (P3) (origin) -- (P4);
\draw [thick][red] (origin) -- (Q0)  (origin) -- (Q1) (origin) -- (Q2) (origin) -- (Q3) (origin) -- (Q4);
\draw [thick][red] (Q) -- (R0)  (Q) -- (R1) (Q) -- (R2) (Q) -- (R3) (Q) -- (R4);
\draw (10:2.05cm) -- (-10:2.05cm);
\draw  (82:2.05cm) -- (62:2.05cm);
\draw  (154:2.05cm) -- (134:2.05cm);
\draw  (226:2.05cm) -- (206:2.05cm);
\draw  (-62:2.05cm) -- (-82:2.05cm);
\draw (V1) .. controls  (10:1cm) and (62:1cm) .. (V2);
\draw  (V3) .. controls  (82:1cm) and (134:1cm) .. (V4);
\draw  (V5) .. controls  (154:1cm) and (206:1cm) .. (V6);
\draw  (V7) .. controls  (226:1cm) and (-82:1cm) .. (V8);
\draw (V9) .. controls  (-62:1cm) and (-10:1cm) .. (V10);
\draw  (5:2.03cm) .. controls  (5:0.5cm) and (67:0.5cm) .. (67:2.03cm);
\draw  (7.5:2.03cm) .. controls  (2.5:0.75cm) and (69.5:0.75cm) .. (64.5:2.03cm);
\draw  (2.5:2.03cm) .. controls  (2.5:0.25cm) and (69.5:0.25cm) .. (69.5:2.03cm);
\draw [thick] (origin) circle (4cm);
\draw [ultra thick][->] (4.2,0) -- (5.5,0);
\draw [thick] (10,0) circle (4cm);
\node [above] at (5,0) {$\pi$};
\end{tikzpicture}
\caption{\small {The local picture of the projection $\pi$ on the universal cover.}}
\label{fig:leaf}
\end{figure}
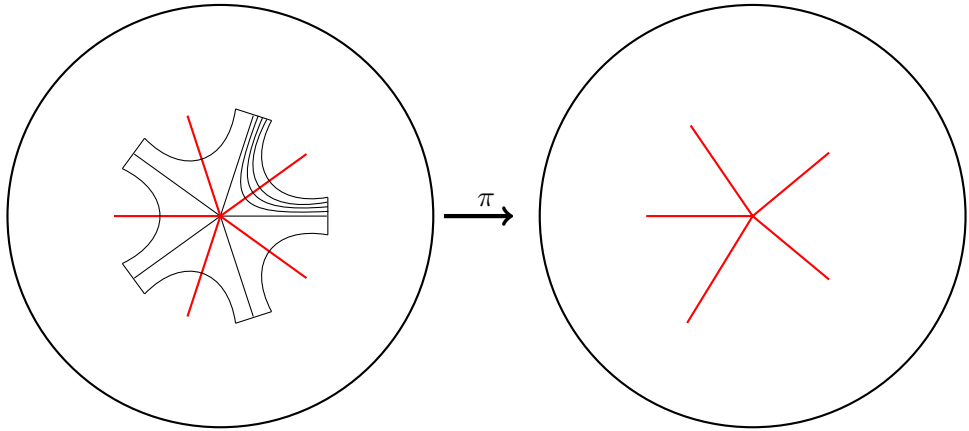
Consider a measured foliation $(\mathcal{F},\mu)$ on $(S,g)$.
It lifts  to a $\pi_1(S)$-equivariant measured foliation $(\mathcal{\widetilde{F}},\widetilde{\mu})$ on the universal cover $(\widetilde{S},g)$.
Let $T$ be the leaf space of $\mathcal{\widetilde{F}}$. There is a natural projection $\pi:\widetilde{S}\rightarrow T$
given by projecting every leaf of $\widetilde{\mathcal{F}}$ to a point. See Figure \ref{fig:leaf}.
We can define a metric $\rho$ on $T$ by pushing forward the measure $\widetilde{\mu}$ by the projection $\pi$, that is,  $\rho=\pi_*\widetilde{\mu}$.
In this way, $(T,\rho)$ becomes a $\mathbb{R}$-tree.
Note that the definition of $(T,\rho)$ only depends on $(\mathcal{F},\mu)$ and the choice of
lifting. The fundamental group $\pi_1(S)$ acts by isometries on $(T,\rho)$ and the map $\pi$ is equivariant with respect to this action.

With the above terminology, now we may state the following result of Wolf \cite{Wolf95, Wolf96}, in a form that we need in this paper.

\begin{proposition}[Proposition 3.1,  \cite{Wolf96}]\label{thm:harmonic}
There is a $\pi_1(S)$-equivariant map $\omega:(\widetilde{S},g)\rightarrow (T,\rho)$ which is equivariantly homotopic to $\pi:(\widetilde{S},g)\rightarrow (T,\rho)$. Off a discrete set, $\omega$ is locally a harmonic projection to a Euclidean line.
Moreover, the  Hopf differential of $\omega$, defined by $\Phi=\omega^*(\rho)^{2,0}$, is
a holomorphic quadratic differential whose vertical measured foliation is equivalent to $(\mathcal{F},\mu)$.
\end{proposition}
Here, we denote by $\omega^*(\rho)^{2,0}$ the $(2,0)$ part of the directional derivatives of $\omega$.
Recall  that the equivalent harmonic map $\omega$ belongs to the Sobolev class $W^{1,2}$
(in the sense of Korevaar and Schoen)
and the directional derivatives of $\omega$ give a symmetric $L^1$ tensor, which can be decomposed to
(according to the conformal structure $g$)
$$\omega^*(\rho)^{2,0}+\omega^*(\rho)^{1,1}+ \omega^*(\rho)^{0,2}.$$

In this paper, the map $\omega$ in Proposition \ref{thm:harmonic} is called an \emph{equivariant harmonic map}  from $(\widetilde{S},g)$  to $(T,\rho)$  or, for simplicity, an equivariant harmonic map  from $(S,g)$  to the $\mathbb{R}$-tree associated with $(\mathcal{F},\mu)$.

\begin{definition} The energy of $\omega$ is defined by
$$E(\omega,g)=\frac{1}{2}\int_S | \omega^*(\rho)^{1,1}|,$$
where the integral domain $S$ is considered as a fundamental domain of $\pi_1(S)$ on the universal cover $\widetilde{S}$.
\end{definition}
Since the harmonic map is $\pi_1(S)$-equivariant, the energy is well-defined.

 \begin{proposition}
The extremal length $\mathrm{Ext}_{\mathcal{F}}(g)$ of the measured foliation $(\mathcal{F},\mu)$ on $(S,g)$ is realized as the energy of
the harmonic map $\omega$.
\end{proposition}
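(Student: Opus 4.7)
The plan is to combine Proposition \ref{pro:area} (extremal length equals total mass of the Hubbard–Masur differential) with Proposition \ref{thm:harmonic} (the Hopf differential of $\omega$ realizes the Hubbard–Masur differential), and then to exploit the fact that $\omega$ is locally a harmonic projection to a \emph{one-dimensional} Euclidean target so that its energy density collapses to twice the modulus of its Hopf differential.

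First, I would fix an equivariant harmonic map $\omega:(\widetilde S,g)\to(T,d)$ associated to the measured foliation $(\mathcal F,\mu)$ equivalent to $\gamma$, and write $\Phi=\langle \omega_z,\omega_z\rangle_d\,dz^2$ for its Hopf differential. By the second statement of Proposition \ref{thm:harmonic}, the vertical foliation of $\Phi$ is measure equivalent to $(\mathcal F,\mu)$, and by the uniqueness clause of the Hubbard–Masur theorem this forces $\Phi=\Phi(\mathcal F)$. In particular $\int_S|\Phi|=\int_S|\Phi(\mathcal F)|=\mathrm{Ext}_\gamma(g)$ by Proposition \ref{pro:area}. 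Thus it only remains to show
\[
E(\omega,g)\;=\;\int_S|\Phi|.
\]

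Next I would compute the energy density pointwise off the (measure-zero) branch set of $T$ and the zero set of $\Phi$. On the complement of this negligible set, Proposition \ref{thm:harmonic} says $\omega$ is locally a harmonic projection to a Euclidean line; hence in a conformal chart there is a real-valued harmonic function $u$ and an isometric embedding $\iota:\mathbb R\hookrightarrow T$ with $\omega=\iota\circ u$. Because the metric $d$ restricts to the standard Euclidean metric along $\iota(\mathbb R)$, the pairing $\langle\cdot,\cdot\rangle_d$ coincides with ordinary scalar multiplication on derivatives of $u$. Therefore $\omega_z$ and $\omega_{\bar z}$ are complex scalars with $\omega_{\bar z}=\overline{\omega_z}$, and
\[
|\omega_z|_d^2=|\omega_{\bar z}|_d^2=|\omega_z|^2,\qquad |\Phi|=|\omega_z|^2|dz|^2.
\]
So the energy density satisfies $\tfrac12\bigl(|\omega_z|_d^2+|\omega_{\bar z}|_d^2\bigr)=|\omega_z|^2$, which is exactly the pointwise mass of $\Phi$ in the conformal coordinate.

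Finally, since the critical graph of $\Phi$ and the preimage of the branch locus of $T$ have two-dimensional Lebesgue measure zero on $S$, integrating the previous identity over a fundamental domain for $\pi_1(S)$ gives
\[
E(\omega,g)=\int_S|\omega_z|^2\,|dz|^2=\int_S|\Phi|=\mathrm{Ext}_\gamma(g),
\]
where equivariance of $\omega$ makes the integral independent of the choice of fundamental domain.

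The only non-routine point is justifying that one may work locally with a real-valued harmonic function: the tree $T$ is non-manifold at its branch points, so one must invoke the precise local structure from Proposition \ref{thm:harmonic} and verify that the exceptional set on which the local isometric embedding $\iota$ fails is small enough (discrete in $\widetilde S$) not to affect the area integral. Once this local reduction is in place, the rest is a direct identification of $\tfrac12(|\omega_z|_d^2+|\omega_{\bar z}|_d^2)$ with $|\Phi|$ and an appeal to Kerckhoff's formula in Proposition \ref{pro:area}.
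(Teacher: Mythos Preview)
Your proof is correct and follows essentially the same route as the paper's: identify the Hopf differential $\Phi$ with the Hubbard--Masur differential via Proposition~\ref{thm:harmonic}, use the one-dimensionality of the target to get $|\omega_z|_d^2=|\omega_{\bar z}|_d^2=|\Phi|$ pointwise a.e., and then apply Proposition~\ref{pro:area}. The paper phrases the middle step as ``the Jacobian $|\omega_z|^2-|\omega_{\bar z}|^2=0$ almost everywhere,'' whereas you derive it from the local realness of $\omega$ (so that $\omega_{\bar z}=\overline{\omega_z}$); these are the same observation, and your extra care about the measure-zero exceptional set is a welcome elaboration rather than a different argument.
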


 \begin{proof}
 When the image of $\omega$ is contained in an embedded interval of $(T,\rho)$,
 $\omega$ is similar to a smooth function. Thus we can express
 the energy density on
 the integral domain (almost everywhere) as
 $$| \omega^*(\rho)^{1,1}(z)|=\rho(\omega(z)) \left( |\omega_z|^2+|\omega_{\bar z}|^2\right) dz d\bar z.$$
 By the same reason, the Hopf differential of $\omega$ is given by
 $$\Phi(z)=\rho(\omega(z))\omega_z\overline\omega_z dz d\bar z.$$
Then the proposition follows from the following equations:
 \begin{eqnarray*}
E(\omega,g)&=&\frac{1}{2}\int_S \rho(\omega(z)) \left( |\omega_z|^2+|\omega_{\bar z}|^2\right) \ dzd\overline{z} \\
&=&\int_S \rho(\omega(z))|\omega_z|^2 \ dzd\overline{z}\\
&=&\int_S|\Phi| \\
&=&\mathrm{Ext}_\mathcal{F}(g).
\end{eqnarray*}
The second equality holds since the Jacobian $|\omega_z|^2-|\omega_{\overline{z}}|^2=0$ almost everywhere.
The last two equalities hold since the Hopf differential $\Phi$ of $\omega$ is
equal to the Hubbard-Masur differential $\Phi(\mathcal{F})$ and,  by Proposition \ref{pro:area}, the area of $\Phi(\mathcal{F})$ is equal to
$\mathrm{Ext}_\mathcal{F}(g) $.
\end{proof}

\section{Second variation of extremal length}\label{sec:second}
Let $(S,g_t)$ be a smooth family of hyperbolic metrics on $S$.
Denote $g_t$ by $g_t(z_t) |d z_t|^2$ in the conformal coordinates $z_t$ of $(S,g_t)$.
For simplicity, we shall denote by $g=g_0$ and $z=z_0$.

\subsection{Quasiconformal deformation of  conformal structures}
The identity map between $(S,g)$ and $(S,g_t)$ induces a quasiconformal mapping $z\mapsto z_t$.
We always assume that the Beltrami differentials $\mu(t)=\frac{\partial z_t}{\partial \bar z}/ \frac{\partial z_t}{\partial z}$
satisfy
$$\mu(t)=t\mu +
o(t),$$
where $\|\mu\|_\infty$ is finite.
We may consider $(S,g_t)$ or $\mu(t)$ as a path in $\mathcal{T}(S)$, with tangent vector $\mu$ at the basis point $(S,g)$.

Let $\mathbb{H}^2=\widetilde{S}$ and let $\Gamma$ be the Fuchsian group of $(S,g)$, that is, $(S,g)=\mathbb{H}^2/\Gamma$.
Any Beltrami differential $\mu$ on $(S,g)$ lifts to an automorphic form $\widetilde{\mu}(z)$  on $\mathbb{H}^2$, a.e.,
$$\widetilde{\mu}(h(z))\overline{h'(z)}/h'(z)=\widetilde{\mu}(z)$$
for all $h\in \Gamma$.

Assume that $\mu(t)=\frac{\partial z_t}{\partial \bar z}/ \frac{\partial z_t}{\partial z}=t\mu+o(t)$.
We can lift the quasiconformal map $z\to z_t$ to $\mathbb{H}^2$
such that it satisfies the  Beltrami equation
$$f^t_{\bar z}(z)=\widetilde{\mu}(t)f^t_z(z),$$
with normalized conditions
$f^t(0)=0, f^t(1)=1, f^t(\infty)=\infty$.

Denote by $$V(z)=\lim_{t\to 0}\frac{f^t(z)-z}{t}.$$

The following lemma is well-known. See Ahlfors \cite{Ahlfors}.

\begin{lemma}\label{lem:Ahlfors}
 $\frac{\partial}{\partial \bar{z}} V(z)=\widetilde{\mu}(z)$ holds in
the sense of distribution.
\end{lemma}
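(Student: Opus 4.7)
The plan is to read off the identity from the classical Ahlfors--Bers integral representation of the normalized solution of the Beltrami equation. Under the three-point normalization $f^t(0)=0$, $f^t(1)=1$, $f^t(\infty)=\infty$, the unique quasiconformal solution of $f^t_{\bar z}=\widetilde{\mu}(t)f^t_z$ admits a representation
$$f^t(z)-z \;=\; \int_{\mathbb{C}} \widetilde{\mu}(t)(\zeta)\, f^t_z(\zeta)\, R(z,\zeta)\,dA(\zeta),$$
where $R(z,\zeta)$ is a Cauchy-type kernel adapted to the normalization and satisfies $\partial_{\bar z}R(\cdot,\zeta)=\delta_\zeta$ distributionally (up to a universal constant). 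I would insert $\widetilde{\mu}(t)=t\widetilde{\mu}+o(t)$ and exploit the Neumann series $f^t_z=1+\mathcal{S}[\widetilde{\mu}(t)f^t_z]$, where $\mathcal{S}$ is the Beurling transform, to conclude $f^t_z-1=O(t)$ in $L^p_{\mathrm{loc}}$ for some $p>2$. Hence at first order $V(z)=\int_{\mathbb{C}}\widetilde{\mu}(\zeta)\,R(z,\zeta)\,dA(\zeta)$, and applying $\partial/\partial\bar z$ recovers $V_{\bar z}=\widetilde{\mu}$ in the sense of distributions.

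A more representation-free alternative, which I would actually write out, proceeds as follows. Pair the Beltrami equation $f^t_{\bar z}=\widetilde{\mu}(t)f^t_z$ against an arbitrary test function $\phi\in C_c^\infty(\mathbb{C})$ and integrate by parts on the left to obtain
$$-\int_{\mathbb{C}} \bigl(f^t(z)-z\bigr)\,\phi_{\bar z}(z)\,dA(z) \;=\; \int_{\mathbb{C}} \widetilde{\mu}(t)(z)\,f^t_z(z)\,\phi(z)\,dA(z),$$
where we have used that $\partial_{\bar z}z=0$ eliminates the $z$-contribution on the left. Dividing by $t$ and letting $t\to 0$ gives
$$-\int_{\mathbb{C}} V(z)\,\phi_{\bar z}(z)\,dA(z) \;=\; \int_{\mathbb{C}} \widetilde{\mu}(z)\,\phi(z)\,dA(z),$$
which is precisely $V_{\bar z}=\widetilde{\mu}$ as distributions.

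The main obstacle is justifying the passage to the limit in the last step: one must ensure both that $\tfrac{f^t-z}{t}\to V$ in $L^1$ on $\mathrm{supp}(\phi)$ and that $f^t_z\to 1$ in $L^p_{\mathrm{loc}}$ for some $p>2$. Both facts follow from the Ahlfors--Bers $L^p$ theory of the Beurling transform together with the assumed $L^\infty$ bound on $\mu$: the $L^p$ boundedness of $\mathcal{S}$ and the smallness $\|\widetilde{\mu}(t)\|_\infty=O(t)$ control the Neumann series for $f^t_z$ uniformly in $t$, which in turn upgrades the pointwise definition of $V(z)$ to the local integrability needed for the distributional pairing. Once this is in place, taking the limit in each term is routine.
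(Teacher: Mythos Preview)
The paper does not supply its own proof of this lemma; it simply attributes the result to Ahlfors \cite{Ahlfors} and states it without argument. Your proposal is correct, and your first sketch via the Cauchy-type integral representation is essentially Ahlfors' original argument. Your second, representation-free argument---pairing the Beltrami equation against a test function, integrating by parts, and passing to the limit using the $L^p$ theory of the Beurling transform to control $f^t_z\to 1$---is a clean variant that avoids writing down the explicit kernel; it leads to the same conclusion with the same analytic input. Either version would be acceptable here.
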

In the following, we shall denote the restriction of $V(z)$ on a fundamental domain of $(S,g)$ by $\dot z$.

\subsection{Weil-Petersson geometry}
The Teichm\"uller space $\mathcal{T}(S)$ is a complex manifold which can be endowed with a K\"ahler metric, the Weil-Petersson metric.
Recall that tangent vectors to $\mathcal{T}(S)$ at a point $(S,g)$ can be represented by harmonic Beltrami differentials of the form $\mu=\bar\Phi/g$, where $\Phi$ is a holomorphic quadratic differential on $(S,g)$.
The Weil-Petersson Riemannian inner product of two such tangent vectors is the $L^2$ inner product
$$<\frac{\bar\Phi}{g},\frac{\bar\Psi}{g}>_{\mathrm{WP}}=\mathrm{Re} \int_S \frac{\Phi}{g}\frac{\bar\Psi}{g} d \mathrm{Area}_g=
\mathrm{Re }\int_S \frac{\Phi(z)\overline{\Psi(z)}}{g(z)} |d z|^2.$$
Although the Weil-Petersson metric is not complete,  it is geodesically convex \cite{Wolpert87}. This means that any two points in $\mathcal{T}(S)$ can be joined by a unique Weil-Petersson geodesic.

If the family of hyperbolic metrics $(S,g_t)$ agree through second order at $(S,g)$ (when $t=0$) with a Weil-Petersson geodesic, then we say that
$(S,g_t)$ is  \emph{Weil-Petersson geodesic} at $(S,g)$.
Although the equation for a family of hyperbolic metrics $(S,g_t)$ to be a Weil-Petersson geodesic is unknown,
 by Ahlfors  \cite{Ahlfors}, $\mu(t)=t\frac{\overline{\Phi}}{g}$ is Weil-Petersson geodesic at $t=0$.

Since we mainly consider the second variation along Weil-Petersson geodesics, in the following discussion, we always assume that $\mu(t)=t\mu +
o(t)$ satisfying
$\mu=\frac{\overline{\Phi}}{g}$ and
$\ddot{\mu}=\frac{d^2}{dt^2}\mu(t)|_{t=0}\equiv 0$.

\begin{remark}
We may assume that $(S,g_t)$ is given by

$$g_t=t \Phi dz^2+ g \left( \mathcal{H}(t)+\frac{t^2|\Phi|^2}{g^2\mathcal{H}(t)} \right)dz d\bar z+ t\bar\Phi d \bar{z}^2$$
where $\mathcal{H}(t)$ is the solution of the Bochner equation

$$\Delta_g \log \mathcal{H}(t)=2\mathcal{H}(t)- 2\frac{t^2|\Phi|^2}{g^2\mathcal{H}(t)}-2.$$
In this situation, the identity map $\mathrm{id}: (S,g) \to (S,g_t)$ is harmonic and $\mu(t)$ is Weil-Petersson
geodesic at $t=0$. See  Wolf \cite{Wolf89}.
\end{remark}

\subsection{Variations of energy (extremal length)}
From now on, we shall identify extremal length as energy of equivariant harmonic maps.
We fix a simple closed curve $\gamma$ on $S$ and let $(T,\rho)$ be the $\mathbb{R}$-tree associated with $\gamma$.
The family of hyperbolic metrics $(S,g_t)$ will be a Weil-Petersson geodesic. Let
$\omega^t:(\widetilde{S},g_t)\rightarrow (T,\rho)$ be the corresponding
harmonic maps.
For simplicity, we denote  $\omega=\omega^0$.

In this part and \S \ref{sec:field}, we investigate the second variational formula of $E(\omega^t, g_t)=\mathrm{Ext}_\gamma(g_t)$.
One may wishes to extend the formula to the extremal
length function of any measured foliation $\mathcal{F}$. However, one of the technical difficulties is the
regularity of $\omega^t$ with respect to $t$. Even in the case of a simple closed curve $\gamma$,
the extremal length function $\mathrm{Ext}_{\gamma}(\cdot)$ may not be  $C^2$ on $\mathcal{T}(S)$.
 We shall discuss the smoothness of  $\mathrm{Ext}_{\gamma}(\cdot)$ on \S \ref{sec:regularity}.

Another possible generalization is to study the variation of $\mathrm{Ext}_{\mathcal{F}_t}(g_t)$ where $\mathcal{F}_t$ is varied
(such a situation appears when one study the Teichm\"uller distance function between a Teichm\"uller geodesic and a fixed point).

There are some technical results which we will use throughout this paper:
\begin{enumerate}
\item
As we have mentioned above, in this case, the complement of the critical leaves  of the measured foliation
equivalent to $\gamma$ is homeomorphic to a cylinder $C_\gamma$.
If we look at the projection of the harmonic map $\omega:(\widetilde{S},g)\to (T,\rho)$ on $C_\gamma$, each closed leaf (homotopic to
$\gamma$) is mapped to a point and the image of $C_\gamma$ is isometric to the interval $[0,1]$ (see \cite{Wolf95}).

Now suppose that $\mathrm{Ext}_\gamma(\cdot)$ is smooth in an open neighborhood of $(S,g_0)$.
When we deform the hyperbolic structure $(S,g_0)$ into $(S,g_t)$, the corresponding
conformal structures $z_t$, when restricted on $C_\gamma$,  vary smoothly. This is because the conformal structure on a cylinder domain is
 characterized by its conformal modulus, which is equal the reciprocal of the extremal length of the core curve.
Then for all $t$ sufficiently small, the harmonic maps $\omega^t: (C_\gamma, z_t)\to [0,1]$ satisfy the same boundary condition,
 that is, $\omega^t$ maps the boundary of $C_\gamma$ to $\{0,1\}$.
It follows from  Eells-Lemaire \cite[\S 4]{EL} that  $\omega^t$ depends smoothly on $t$.

\item Note that  we can consider $\omega$ as a real-valued harmonic function
(since we are only interested in the integration over a fundamental domain of $S$, we can consider
$\omega$ as a harmonic function on the cylinder $C_\gamma$, which is a full measure subset of $S$).
In this case, $\omega$ satisfies the equation $\omega_{z \bar z}=0$ and the Jacobian
$|\omega_z|^2-|\omega_{\bar z}|^2=0.$
\end{enumerate}

In the remaining part of this section, we assume that the family of harmonic maps varies smoothly on $t$.

\begin{definition}
The \emph{variational vector} of $\omega^t$ at $t=0$ is defined by
$$\dot\omega(z)=\lim_{t\to 0}\frac{\omega^t(z)-\omega(z)}{t}$$
 locally under conformal coordinates of $(S,g)$.
\end{definition}

Denote by
$E(\omega^t,g_t)$   the energy of  $\omega^t$, which is  equal to the extremal length of $\gamma$ on $(S,g_t)$. That is,
$$E(\omega^t,g_t)=\int_S|\frac{\partial \omega^t}{\partial z_t}|^2 \ dz_td\bar{z_t}.$$

To study the second variation,
we  first separate the overall variation into a term that refers only to the second variation of the metrics $g_t$ and another term
that refers only to the second variation of the maps $\omega^t$. Such a separation is quite standard for a variational functional (see Wolf \cite{Wolf09}
for example).

Formally, we set $$g_t=g+t \dot g+o(t)$$ and
$$\omega^t=\omega+ t \dot\omega+\frac{t^2}{2}\ddot\omega +o(t^2).$$
One can consider $\dot g$ as a tangent vector to $\mathcal{M}_{-1}$ at $g_0$ and $\dot\omega$
as a tangent vector to $C^{\infty}(\widetilde S)$ at $\omega^0$. In the following, we denote by
$D_1 E[\dot \omega]$ the directional derivative of $E$ in the direction $\dot\omega$
and $D_2 E$ the directional derivative of $E$ in the direction $\dot g$. The directional
derivative of $D_2 E[\dot g]$ in the direction $\dot g$ is denoted by $D^2_{22}E[\dot g, \dot g]$
and vice versa.

\begin{proposition}[Separation of second variation]
$$\frac{d^2}{dt^2}|_{t=0}E(\omega^t,g_t)=-D^2_{11}E[\dot \omega,\dot\omega]+D^2_{22}E[\dot g,\dot g].$$
\end{proposition}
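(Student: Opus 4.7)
The proposition is a separation-of-variables identity for the second variation of an energy functional along a path in which the map $\omega^t$ is the harmonic minimizer for the domain metric $g_t$. My plan is to apply the standard variational chain rule for such ``critically-chosen'' functionals, in the spirit of Wolf \cite{Wolf09}, exploiting crucially that $\omega^t$ is always a critical point of the map $\omega\mapsto E(\omega, g_t)$.

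First I would expand $\frac{d^2}{dt^2}\big|_{t=0}E(\omega^t,g_t)$ by the chain rule as a sum of five contributions: the pure quadratic forms $D_{11}^2 E[\dot\omega,\dot\omega]$ and $D_{22}^2 E[\dot g,\dot g]$, the mixed bilinear term $2D_{12}^2 E[\dot\omega,\dot g]$, and the two linear ``acceleration'' terms $D_1 E[\ddot\omega]$ and $D_2 E[\ddot g]$. Harmonicity of $\omega^0$ with respect to $g_0$ immediately kills $D_1 E[\ddot\omega]$. Because $\omega^t$ is harmonic with respect to $g_t$ for every $t$, the identity $D_1 E(\omega^t,g_t)[\psi]\equiv 0$ may be differentiated in $t$ with $\psi$ held fixed; specializing to $\psi=\dot\omega$ and using the symmetry $D_{12}^2=D_{21}^2$ gives
\begin{equation*}
D_{12}^2 E[\dot\omega,\dot g]=-D_{11}^2 E[\dot\omega,\dot\omega],
\end{equation*}
so that $D_{11}^2+2D_{12}^2$ collapses to $-D_{11}^2 E[\dot\omega,\dot\omega]$. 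The residual acceleration term $D_2 E[\ddot g]$ is absorbed into the authors' notation $D_{22}^2 E[\dot g,\dot g]$ when the latter is read as the total second derivative of the one-parameter functional $t\mapsto E(\omega^0,g_t)$ at the basepoint; alternatively, one can invoke the Assumption $\ddot\mu\equiv 0$ together with the explicit form of $g_t$ in the Remark after the Assumption to control $\ddot g$ directly.

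The main obstacle is analytic rather than algebraic. I need to justify that $t\mapsto E(\omega^t,g_t)$ is $C^2$ in a sense strong enough to apply the chain rule termwise (the paper cites Eells--Lemaire for real-analyticity of this map, which suffices), and, more seriously, that the variational vector field $\dot\omega$ exists as a bona fide tensor on $(S,g)$ and satisfies an elliptic variational equation so that the differentiated harmonicity identity can legitimately be tested against $\psi=\dot\omega$. Since the target is an $\mathbb{R}$-tree and $\omega^0$ folds along the critical leaves emanating from the zeros of the Hubbard--Masur differential $\Phi$, I must also check that the integration by parts implicit in $D_1 E=0$ is not corrupted by the singular set. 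This is precisely the content of the analysis postponed to Section \ref{sec:field}; once that machinery is in place, the identity above reduces to bookkeeping.
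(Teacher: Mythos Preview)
Your proposal is correct and follows essentially the same route as the paper: differentiate the harmonicity condition $D_1E(\omega^t,g_t)[\,\cdot\,]\equiv 0$ in $t$ to obtain $D_{12}^2E[\dot\omega,\dot g]=-D_{11}^2E[\dot\omega,\dot\omega]$, then substitute into the full chain-rule expansion of $\frac{d^2}{dt^2}\big|_{t=0}E(\omega^t,g_t)$. Your treatment is in fact slightly more careful than the paper's, since you explicitly track the acceleration terms $D_1E[\ddot\omega]$ and $D_2E[\ddot g]$ and correctly observe that the latter is absorbed into the paper's convention $D_{22}^2E[\dot g,\dot g]:=\frac{d^2}{dt^2}\big|_{t=0}E(\omega^0,g_t)$ (confirmed by the lemma immediately following the proposition); the paper's own proof simply writes the three-term expansion without comment on either acceleration term.
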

\begin{proof}
Since the energy of a harmonic map is stationary with respect to
variation of the map, we have
$D_1 E(\omega^t,g_t)[\dot
\omega]=0$
and $D_1 E(\omega^t,g_t)[\ddot \omega]=0$.

It follows that
\begin{eqnarray*}
0 &=&\frac{d}{dt}|_{t=0}D_1 E(\omega^t,g_t)[\dot \omega] \\
&=&D^2_{11}E[\dot \omega,\dot\omega]+D^2_{12}E[\dot \omega,\dot g]+D_1 E[\ddot \omega] \\
&=& D^2_{11}E[\dot \omega,\dot\omega]+D^2_{12}E[\dot \omega,\dot g].
\end{eqnarray*}
As a result,
$$D^2_{11}E[\dot \omega,\dot\omega]=-D^2_{12}E[\dot \omega,\dot g].$$
The second variation of extremal length function is given by
$$
\frac{d^2}{dt^2}|_{t=0} E(\omega^t,g_t)
= D^2_{11}E[\dot \omega,\dot\omega]+2D^2_{12}E[\dot \omega,\dot g] +
D^2_{22}E[\dot g,\dot g].$$
Thus, we have
\begin{equation}\label{equ:sec}
\frac{d^2}{dt^2}|_{t=0}E=-D^2_{11}E[\dot \omega,\dot\omega]+D^2_{22}E[\dot g,\dot g].
\end{equation}
\end{proof}

 The terms $D^2_{22}E[\dot g,\dot g]$ and $D^2_{11}E[\dot \omega,\dot\omega]$ are formulated in the following lemmas.

 \begin{lemma} With the above notation, we have
$$ D^2_{22}E [\dot g,\dot g]=\frac{d^2}{dt^2}|_{t=0}E(\omega,g_t)=2\int_S |\mu|^2(|\omega_z|^2+|\omega_{\bar z}|^2) \ dzd\bar z.$$
 \end{lemma}
\begin{proof}
 By definition,
 $$E(\omega^t,g_t)=\int_S|\frac{\partial \omega^t}{\partial z_t}|^2 \ dz_td\bar{z_t}.$$
 Note that
 $$dz_t=(z_t)_z dz+(z_t)_{\bar z} d \bar z ,$$
 $$d \overline z_t=(\bar z_t)_z dz+(\bar z_t)_{\bar z} d \bar z,$$
 $$\mu(t)=\frac{(z_t)_{\bar z}}{(z_t)_z},$$
\begin{equation}\label{equ3}
dz_td\bar{z_t}=|(z_t)_z|^2(1-|\mu(t)|^2)dz d \bar z.\end{equation}

 It follows from the chain rule of differential that

\begin{eqnarray*}
\omega_{z_t}&=&\frac{\omega_z(\bar{z_t})_{\bar z}-\omega_{\bar z}(\bar{z_t})_z}{|(z_t)_z|^2(1-|\mu(t)|^2)} \\
&=&\frac{\omega_z-\overline{\mu(t)}\omega_{\overline{z}}}{(z_t)_z(1-|\mu(t)|^2)}.
\end{eqnarray*}

As a result,

\begin{eqnarray}\label{equ4}
|\omega_{z_t}|^2&=&\frac{(\omega_z-\overline{\mu(t)}\omega_{\bar{z}})\overline{(\omega_z-\overline{\mu(t)}\omega_{\bar{z}})}}
{|(z_t)_z|^2(1-|\mu(t)|^2)^2} \\
&=&\frac{|\omega_z|^2+|\mu(t)|^2|\omega_{\bar z}|^2-2 \mathrm{Re}(\mu(t)\omega_z\overline \omega_z)}{|(z_t)_z|^2(1-|\mu(t)|^2)^2}.\nonumber
\end{eqnarray}

Combining equation $(\ref{equ3})$ with $(\ref{equ4})$, we have
$$E(\omega,g_t)=\int_S \frac{|\omega_z|^2+|\mu(t)|^2|\omega_{\bar z}|^2-2 \mathrm{Re}(\mu(t)\omega_z\overline \omega_z)}{1-|\mu(t)|^2} d z d\bar z.$$

By our assumption,  $\mu(t)=t\mu+o(t^2)$. Thus we have
$$E(\omega,g_t)=\int_S \frac{|\omega_z|^2+t^2|\mu|^2|\omega_{\bar z}|^2-t2 \mathrm{Re}(\mu\omega_z\overline \omega_z)}{1-t^2|\mu|^2}d z d\bar z+o(t^2).$$

Now we consider the variation of $E(\omega,g_t)$. Note that

\begin{eqnarray*}
\frac{d}{dt}E(\omega,g_t)&=&\int_S \frac{2t|\mu|^2|\omega_{\overline z}|^2-2 \mathrm{Re}(\mu\omega_z\overline \omega_z)}{1-t^2|\mu|^2}d z d\bar z \\
&& -\int_S (|\omega_z|^2+t^2|\mu|^2|\omega_{\bar z}|^2-t2 \mathrm{Re}(\mu\omega_z\overline \omega_z))\frac{-2t|\mu|^2}{|1-t^2|\mu|^2|^2}d z d\bar z \\
&& + o(t).
\end{eqnarray*}
It follows from direct calculation that
\begin{equation}\label{equ5}
\frac{d^2}{dt^2}|_{t=0}E(\omega,g_t)=2\int_S |\mu|^2(|\omega_z|^2+|\omega_{\bar z}|^2) \ dzd\bar z.
\end{equation}
\end{proof}

\begin{remark}\label{remark:Gardiner}
The above proof shows that
$$\frac{d}{dt}|_{t=0}E(\omega,g_t)=-2\mathrm{Re}\int_S \mu\omega_z \overline{\omega}_z dzd\bar{z}.$$
Since
$$\frac{d}{dt}|_{t=0}E(\omega^t,g_t)= D_1 E[\dot\omega]+ D_2 E[\dot g]$$
 and
 $$D_1 E[\dot
\omega]=0,
$$
we obtain the following first variational formula:

\begin{equation}
\frac{d}{dt}|_{t=0}E(\omega^t,g_t)=-2\mathrm{Re}\int_S\Phi \mu
\end{equation}
 where $\Phi=\omega_z
\overline{\omega}_zdz^2\in QD(g)$, which is the Hubbard-Masur
differential for $\gamma$ at $(S,g)$.
Actually, the above formula is valid for any measured foliation (see Gardiner \cite{Gardiner} and Wentworth  \cite{Wentworth}).

\end{remark}

The next lemma is to evaluate the term $D^2_{22}E[\dot\omega,\dot\omega]=\frac{d^2}{dt^2}|_{t=0}E(\omega^t,g)$ in $(\ref{equ:sec})$.
\begin{lemma} We have
$$\frac {d^2} {dt^2}|_{t=0} E(\omega^t, g)=2 \int_S | \dot \omega_z|^2 \ dz d\bar{z}.$$
\end{lemma}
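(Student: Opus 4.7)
The plan is to exploit the fact that for a simple closed curve the target $(T,d)$ is isometric to $\mathbb{R}$, so each $\omega^t$ can be treated (locally, and after lifting) as a real-valued $\pi_1(S)$-equivariant function on $\widetilde S$. In particular $\overline{\omega^t_z}=\omega^t_{\bar z}$, so $|\omega^t_z|^2=|\omega^t_{\bar z}|^2=\omega^t_z\omega^t_{\bar z}$, and since the domain metric is held fixed at $g_0$ the energy simplifies to
$$E(\omega^t,g_0)=\int_S \omega^t_z\,\omega^t_{\bar z}\,dz\,d\bar z,$$
the integral being taken over a fundamental domain.

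Next, I would Taylor-expand $\omega^t=\omega+t\dot\omega+\tfrac{t^2}{2}\ddot\omega+O(t^3)$ in $t$ and differentiate the integrand twice at $t=0$. Using real-analyticity of $t\mapsto E(\omega^t,g_0)$ (cf.\ Eells--Lemaire, as already invoked in the excerpt) to justify differentiating under the integral, this gives
$$\frac{d^2}{dt^2}\Big|_{t=0}E(\omega^t,g_0)=\int_S\bigl(\omega_z\ddot\omega_{\bar z}+\ddot\omega_z\omega_{\bar z}\bigr)\,dz\,d\bar z+2\int_S\dot\omega_z\dot\omega_{\bar z}\,dz\,d\bar z.$$

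The heart of the argument is to kill the two $\ddot\omega$ terms. For this, I would integrate by parts, using that $\omega=\omega^0$ is harmonic and hence $\omega_{z\bar z}=0$ off a discrete set (namely, the zeros of the Hopf differential $\Phi=\omega_z\omega_z\,dz^2$). The $\pi_1(S)$-equivariance of all the data makes the boundary terms over the fundamental domain cancel, so
$$\int_S\omega_z\ddot\omega_{\bar z}\,dz\,d\bar z=-\int_S\omega_{z\bar z}\,\ddot\omega\,dz\,d\bar z=0,$$
and symmetrically $\int_S\ddot\omega_z\omega_{\bar z}\,dz\,d\bar z=0$. What survives is $2\int_S\dot\omega_z\dot\omega_{\bar z}\,dz\,d\bar z=2\int_S|\dot\omega_z|^2\,dz\,d\bar z$, where the last equality uses $\overline{\dot\omega_z}=\dot\omega_{\bar z}$ since $\dot\omega$ is real-valued. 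This is exactly the claimed formula.

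The step I expect to require the most care is the integration by parts: one needs to know that $\ddot\omega$ has enough regularity (so the pairing with $\omega_z$ and $\omega_{\bar z}$ makes sense and the boundary terms at the discrete singular set of $\omega$ vanish). Both are standard for harmonic maps to $\mathbb{R}$-trees whose singularities form a discrete set of finite-order conical points, so no new analytic input beyond what the paper has already set up should be required; the calculation above is otherwise entirely mechanical Taylor expansion.
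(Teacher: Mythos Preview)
Your proof is correct and follows essentially the same route as the paper: Taylor expand $\omega^t$ to second order, differentiate the energy integrand, and kill the $\ddot\omega$ terms via integration by parts together with the harmonicity $\omega_{z\bar z}=0$. The only cosmetic difference is that you invoke the real-valuedness of $\omega^t$ at the outset to write $|\omega^t_z|^2=\omega^t_z\omega^t_{\bar z}$, whereas the paper carries complex conjugates through and takes real parts; the computations are otherwise identical.
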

\begin{proof}
As before, we set $$\omega^t=\omega+t\dot\omega+\frac{t^2}{2} \ddot\omega+o(t^2).$$ Then

$$E(\omega^t, g)=\int_S |\omega_z^t|^2 \ dzd \bar z$$
and
\begin{eqnarray*}
|\frac{\partial \omega^t}{\partial z}|^2&=&(\omega_z+t\dot\omega_z+\frac{t^2}{2}\ddot\omega_z+o(t^2))\overline{(\omega_z+t\dot\omega_z+\frac{t^2}{2}\ddot\omega_z+o(t^2))} \\
&=&|\omega_z|^2+2t \ \mathrm{Re}(\dot\omega_z \overline{\omega}_{\bar{z}})+ t^2(|\dot\omega_z|^2+\mathrm{Re}(\omega\overline{\ddot\omega_z}_{\bar{z}}))+o(t^2).
\end{eqnarray*}

Then
\begin{eqnarray*}
\frac d {dt}|_{t=0}E(\omega^t, g)
&=&2\mathrm{Re}\int_S \dot\omega_z \overline{\omega}_{\bar{z}} \ dzd\bar z  \\
( \mathrm{integration} \ \mathrm{by} \ \mathrm{parts})  &=&-2\mathrm{Re }\int_S \dot \omega \overline {\omega_{z \bar z}} \ dz d \bar z \\
(\mathrm{by} \ \mathrm{harmonicity})&=&0.
\end{eqnarray*}

And then

\begin{eqnarray}\label{equ6}
\frac {d^2} {dt^2}|_{t=0} E(\omega^t, g)&=&
2 \int_S | \dot \omega_z|^2 dz d\bar{z}+2\mathrm{Re}\int_S \omega_z\overline{\ddot\omega}_{\bar{z}} \ dzd\overline{z} \\
( \mathrm{integration} \ \mathrm{by} \ \mathrm{parts})&=&2 \int_S | \dot \omega_z|^2 dz d\bar{z}-2\mathrm{Re}\int_S \omega_{z\bar{z}}\overline{\ddot\omega} \ dzd\bar{z} \nonumber \\
(\mathrm{since} \ \omega_{z\bar{z}}=0) &=& 2 \int_S | \dot \omega_z|^2 dz d\bar{z}. \nonumber
\end{eqnarray}
\end{proof}
Combining $(\ref{equ:sec}), (\ref{equ5})$ and $(\ref{equ6})$, we have
\begin{eqnarray*}\label{equ7}
\frac{d^2} {dt^2}|_{t=0}E(\omega^t,g^t)&=& 2 \int_S
|\mu|^2(|\omega_z|^2+|\omega_{\bar z}|^2) \ dzd \bar z -2\int_S |\dot
\omega_z|^2 \ dzd \bar z.\nonumber
\end{eqnarray*}

In conclusion, we have the following:
\begin{theorem}\label{theorem:second}
Let $\gamma$ be a simple closed curve on $S$ and let $g_t$ be a Weil-Petersson geodesic in
$\mathcal{T}(S)$ with Beltrami differential $\mu(t)=t\mu+o(t^2)$. Suppose that $\mathrm{Ext}_{\gamma}(g_t)$ is a smooth function of $t$.
Denote by
$\omega$ is the harmonic map from $(\widetilde{S},g_0)$ to the $\mathbb{R}$-tree determined by $\gamma$. Then

\begin{eqnarray*}
\frac{d^2} {dt^2}|_{t=0}\mathrm{Ext}_{\gamma}(g_t)&=& 2\int_S
|\mu|^2(|\omega_z|^2+|\omega_{\bar z}|^2) \ dzd \bar z -2\int_S |\dot
\omega_z|^2 \ dzd \bar z \\
&=& 4 \int_S
|\mu|^2|\omega_z|^2 \ dzd \bar z -2\int_S |\dot
\omega_z|^2 \ dzd \bar z .
\end{eqnarray*}
\end{theorem}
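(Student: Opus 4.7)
The plan is to assemble the theorem directly from the three results just established in this section. First I would invoke the Separation of Second Variation proposition, which decomposes
$$\frac{d^2}{dt^2}|_{t=0} E(\omega^t,g_t) = -D^2_{11}E(\omega^t,g_t)[\dot\omega,\dot\omega] + D^2_{22}E(\omega^t,g_t)[\dot g,\dot g].$$
Next I would substitute the evaluations from the two subsequent lemmas: the metric-variation term gives $D^2_{22}E[\dot g,\dot g] = 2\int_S |\mu|^2(|\omega_z|^2 + |\omega_{\bar z}|^2)\,dz d\bar z$, while the map-variation term gives $D^2_{11}E[\dot\omega,\dot\omega] = 2\int_S |\dot\omega_z|^2\,dz d\bar z$. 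Combined with the identification of $E(\omega^t,g_t)$ with $\mathrm{Ext}_\gamma(g_t)$ recalled in Section \ref{sec:pre}, this yields the first displayed equality of the theorem.

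For the second equality, one needs $|\omega_z|^2 + |\omega_{\bar z}|^2 = 2|\omega_z|^2$ almost everywhere on $S$, i.e., the vanishing of the Jacobian of $\omega$. This is automatic for the harmonic map $\omega:(\widetilde{S},g_0)\to (T,d)$: by Proposition \ref{thm:harmonic}, off a discrete set $\omega$ is locally a harmonic projection onto a Euclidean line, so $|\omega_z|^2 - |\omega_{\bar z}|^2 = 0$ almost everywhere. This is the same identity already used in Section \ref{sec:pre} when identifying the harmonic map energy with the area of the Hubbard-Masur differential, and may be quoted verbatim.

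Since all the analytic content is packaged into the three preceding results, the proof is essentially a bookkeeping step. The only point calling for care is the justification of the integrations by parts that entered the proofs of the two lemmas near the singularities of $\omega$ (equivalently, the zeros of the Hopf differential $\Phi$): one must check that the variational vector field $\dot\omega$ is sufficiently regular and that the discrete singular set contributes no boundary terms. This is the main potential obstacle, but it is handled by the Eells--Lemaire real-analyticity of $E(\omega^t,g_t)$ and Hartman's uniqueness already cited in the discussion preceding the lemmas; no further difficulty arises, and the theorem follows by direct substitution.
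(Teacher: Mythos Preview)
Your proposal is correct and follows exactly the paper's approach: the theorem is obtained by combining the separation formula (\ref{equ:sec}) with the two lemmas evaluating $D^2_{22}E$ and $D^2_{11}E$, and the second equality follows from the vanishing Jacobian $|\omega_z|^2=|\omega_{\bar z}|^2$. Your additional remark about regularity of $\dot\omega$ near the zeros of $\Phi$ is a reasonable caution that the paper does not make explicit.
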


\subsection{The variational vector $\dot\omega$}\label{sec:field}

We have shown that $$\frac {d^2} {dt^2}|_{t=0} E(\omega^t, g_0)=2 \int_S | \dot \omega_z|^2 dz d\bar{z}.$$
In order to compare this formula with
$(\ref{equ5})$, it is important to find an expression for $\dot\omega$ or $\dot\omega_z$ in terms of $\mu$ and $\omega$.
Note that  we can consider $\omega$ as a real-valued harmonic function.
In this case, $(\ref{equ5})$ becomes
\begin{equation*}
\frac {d^2} {dt^2}|_{t=0} E(\omega, g_t)=4 \int_S |\mu|^2|\Phi(z)| dz d\bar{z},
\end{equation*}
where $\Phi(z) dz^2=\omega_z\overline \omega_zd z^2$ is the Hopf differential of $\omega$. It will be interesting to
give an expression of $\dot\omega$ or $\dot\omega_z$ in term of $\mu$ and $\Phi$.

\bigskip

Consider the harmonic map equation $$H(\omega^t,z_t)=\frac{\partial^2\omega^t}{\partial z_t\partial \overline{z_t}}=0.$$
Differentiating in $t$, we have
\begin{equation}\label{equ8}
-\frac{d}{dt}|_{t=0}H(\omega,z_t)=\frac{d}{dt}|_{t=0}H(\omega^t,z)=\dot\omega_{z\overline{z}}.
\end{equation}

To compute $\frac{d}{dt}|_{t=0}H(\omega,z_t)$, we express the operator $\frac{\partial^2}{\partial z_t \partial \bar{z_t}}$ as
$$\{\frac 1 {1-t^2 | \mu|^2} \frac 1 {(z_t)_z}(\partial_z -t
 \overline {\mu}\partial_{\bar{z}})\} \circ
\{\frac 1 {1-t^2 |\mu|^2} \frac 1 {\overline {(z_t)_z}} (-t
\mu\partial_z+\partial_{\bar z})\}+o(t)$$
$$=\{\frac 1 {(z_t)_z}(\partial_z -t
\overline {\mu}\partial_{\bar z})\} \circ \{ \frac 1 {\overline {(z_t)_z}}
(-t \mu\partial_z+\partial_{\bar z})\}+o(t).$$
It is not hard to verify that
\begin{eqnarray*}
H(\omega,z_t)&=& -t(\mu_z\omega_z+\mu\omega_{zz})+ (-t\mu\omega_z+\omega_{\bar z}) (-\overline{(z_t)_{z\bar z}}) \\
&& -t\overline{\mu}( \omega_{\bar z\bar z}+ \overline{(z_t)_{z\bar z}}\omega_{\bar z})+o(t).
\end{eqnarray*}
As a result,
\begin{eqnarray*}
-\frac{d}{dt}|_{t=0}H(\omega,z_t)&=& \mu_z\omega_z+\mu\omega_{zz}+ \omega_{\bar z}\overline{\dot z_{z \bar z}}+\overline{\mu} \omega_{\bar z\bar z}.
\end{eqnarray*}

By Lemma \ref{lem:Ahlfors}, $\mu=\dot z_{\overline z}$, hence $\overline{\dot z_{z\overline{z}}}=\overline {\mu_z}$, and then
\begin{eqnarray}\label{equ9}
-\frac{d}{dt}|_{t=0}H(\omega,z_t)&=&\mu \omega_{zz}+\overline{ \mu}\omega_{\bar z \bar z}
+\overline {\mu_z} \omega_{\bar z}+\mu_z \omega_z  \nonumber \\
&=&\frac{\partial}{\partial z}(\mu\omega_z)+\frac{\partial}{\partial \overline{z}}(\overline{\mu}\omega_{\overline{z}}).
\end{eqnarray}
It follows from $(\ref{equ8})$ and $(\ref{equ9})$ that
\begin{equation}\label{equ10}
\dot\omega_{z\bar z}=\frac{\partial}{\partial z}(\mu\omega_z)+\frac{\partial}{\partial \bar{z}}(\overline{\mu}\omega_{\bar{z}}).
\end{equation}

Using  $(\ref{equ10})$ and integration by part, we have
\begin{eqnarray}\label{equ11}
\int_S |\dot \omega_z|^2 \ dzd \bar z&=&
-\int_S \overline{\dot \omega}\dot \omega_{z \bar z} \ dz d\bar{z} \nonumber \\
&=&-\int_S \frac{\partial}{\partial z}(\mu\omega_z)\overline{\dot \omega}+\frac{\partial}{\partial \overline{z}}(\overline{\mu}\omega_{\bar{z}})\overline{\dot \omega} \  dzd\bar{z} \nonumber \\
&=& \int_S \mu\omega_z\overline{\dot\omega}_z+\overline{\mu}\omega_{\overline{z}}\overline{\dot\omega}_{\bar{z}} \ dzd\bar{z}.
\end{eqnarray}

The equality $(\ref{equ11})$ will be used in \S \ref{sec:pluri} to Theorem \ref{theorem:WP}.

\section{On the regularity of extremal length}\label{sec:regularity}
In this section, we discuss some partial results on the regularity of the extremal length functions.

Given a measured foliation $\mathcal{F}$ on $S$, the extremal length $\mathrm{Ext}_{\mathcal{F}}(\cdot)$ is a $C^1$-function on
$\mathcal{T}(S)$, see \cite[Proposition 4.2]{GM}.

Let $Q(S)$ be the cotangent bundle of $\mathcal{T}(S)$, with $\pi$ the natural projection.
Due to Hubbard-Masur \cite{HM},
there is a homeomorphism
$$Q(S) \to \mathcal{T}(S)\times \mathcal{MF},$$
which associates each holomorphic quadratic differential $\Phi$ with its underlining complex structure $\pi(\Phi)$ and its vertical measured foliation $v(\Phi)$. Fix a measured foliation $\mathcal{F}$ on $S$, let
$$E(\mathcal{F})=\{\Psi\in Q(S) : v(\Psi)=\mathcal{F}\}.$$
It follows that $\pi: E(\mathcal{F})\to \mathcal{T}(S)$ is a homeomorphism. It is the inverse of the Hubbard-Masur map, that is,
$$\pi \circ \Phi_{\mathcal{F}}(X) = X, $$
where $\Phi_{\mathcal{F}}(X)$ denotes the holomorphic quadratic differential on $X$ with vertical measured foliation equivalent to $\mathcal{F}$.

With the above notation, it follows from Proposition \ref{pro:area} that the extremal length $\mathrm{Ext}_{\mathcal{F}}(\cdot)$
is given by the norm $\|\Phi_{\mathcal{F}}(\cdot)\|$.
It was observed by Royden \cite{Royden} that when $\Phi_{\mathcal{F}}(X)$ has zeros of order at least $3$,
$\|\Phi_{\mathcal{F}}(\cdot)\|$ is not $C^2$. Note that a Jenkins-Strebel differential realizing some simple closed curve can have higher-order
zeros and, if this happen, it is unreasonable to compute the second derivative of the extremal length.

To avoid such a difficulty, we apply a result of Masur \cite{Masur}. Denote by $\mathcal{Q}_1$ the subset of quadratic differentials on
$S$ with simple zeros (in the study of Teichm\"uller flows, $\mathcal{Q}_1$ is known as the \emph{principal stratum}).
 $\mathcal{Q}_1$ is an open and dense subset
of $Q(S)$. Masur  \cite{Masur} showed that when $\Phi_0\in \mathcal{Q}_1$, there exist real analytic coordinates for $\mathcal{Q}_1$ near $\Phi_0$ such that the
norm $\|\Phi\|$ is real analytic.\footnote{The point is that, each $\Phi$ near $\Phi_0$ has a geodesic triangulation (the \emph{Delaunay triangulation}) such that the vertices are zeros of $\Phi$. The real analytic coordinates are given by the (directed) side lengths of the triangles. The norm $\|\Phi\|$ is just the sum of areas of these triangles, which are obviously real analytic functions of the side lengths, as soon as all the triangles don't collapse at $\Phi_0$.
} If, moreover, $\Phi_0\in \mathcal{Q}_1\cap E(\mathcal{F})$, then the restriction of the analytic coordinates
on $\mathcal{Q}_1\cap E(\mathcal{F})$ implies that the map
$\pi:  E(\mathcal{F}) \to \mathcal{T}(S)$
is  real analytic in a neighhorhood of $\Phi_0$. As a result, for any $(X,\mathcal{F})\in \mathcal{T}(S)\times \mathcal{MF}$
such that $\Phi_{\mathcal{F}}(X)$ has only simple zeros, the extremal length function $\mathrm{Ext}_{\mathcal{F}}(\cdot)$
is real analytic in a neighborhood of $X$.

In our study, we mainly consider measured foliations corresponding to simple closed curves.
Let $\gamma$ be a simple closed curve on $S$, denote
$$\mathcal{T}_\gamma= \{X\in \mathcal{T}(S) :  \Phi_{\gamma}(X)\in \mathcal{Q}_1\},$$
i.e., the subset of Teichm\"uller space such that the holomorphic quadratic differentials realizing $\gamma$ have only simple zeros.

\begin{lemma}\label{lem:denseopen}
The subset $\mathcal{T}_\gamma$ is open and dense in $\mathcal{T}(S)$.
\end{lemma}
\begin{proof}Although the result may be known to the experts,
 we include a proof for completeness. It is clear that $\mathcal{T}_\gamma$ is an open subset of $\mathcal{T}(S)$.
Note that $\mathcal{T}_\gamma$  is also dense: by an argument of Hubbard-Masur, at a multiple zero of a quadratic differential,
we can break up the zeros into simple zeros and keep the same vertical foliation.

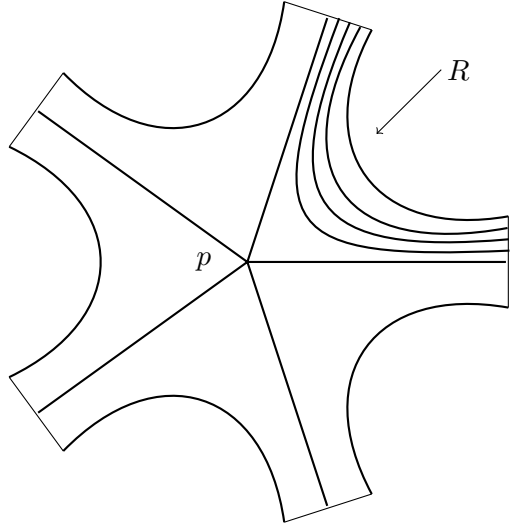
\begin{figure}[htbp]
\centering
\begin{tikzpicture}[scale=1.7]
\path (0,0) coordinate (origin);
\path (0:2cm) coordinate (P0);
\path (1*72:2cm) coordinate (P1);
\path (2*72:2cm) coordinate (P2);
\path (3*72:2cm) coordinate (P3);
\path (4*72:2cm) coordinate (P4);
\path (10:2.05cm) coordinate (V1);
\path (62:2.05cm) coordinate (V2);
\path (82:2.05cm) coordinate (V3);
\path (134:2.05cm) coordinate (V4);
\path (154:2.05cm) coordinate (V5);
\path (206:2.05cm) coordinate (V6);
\path (226:2.05cm) coordinate (V7);
\path (-82:2.05cm) coordinate (V8);
\path (-62:2.05cm) coordinate (V9);
\path (-10:2.05cm) coordinate (V10);
\draw [thick] (origin) -- (P0) (origin) -- (P1) (origin) -- (P2)
(origin) -- (P3) (origin) -- (P4);
\draw (10:2.05cm) -- (-10:2.05cm);
\draw  (82:2.05cm) -- (62:2.05cm);
\draw  (154:2.05cm) -- (134:2.05cm);
\draw  (226:2.05cm) -- (206:2.05cm);
\draw  (-62:2.05cm) -- (-82:2.05cm);
\draw [thick] (V1) .. controls  (10:1cm) and (62:1cm) .. (V2);
\draw [thick] (V3) .. controls  (82:1cm) and (134:1cm) .. (V4);
\draw [thick] (V5) .. controls  (154:1cm) and (206:1cm) .. (V6);
\draw [thick] (V7) .. controls  (226:1cm) and (-82:1cm) .. (V8);
\draw [thick] (V9) .. controls  (-62:1cm) and (-10:1cm) .. (V10);
\draw [thick] (5:2.03cm) .. controls  (5:0.5cm) and (67:0.5cm) .. (67:2.03cm);
\draw [thick] (7.5:2.03cm) .. controls  (2.5:0.75cm) and (69.5:0.75cm) .. (64.5:2.03cm);
\draw [thick] (2.5:2.03cm) .. controls  (2.5:0.25cm) and (69.5:0.25cm) .. (69.5:2.03cm);
\draw [->] (1.5,1.5) -- (1,1);
\node [left] at (1.8,1.5) {$R$};
\node [left] at (-0.2,0) {$p$};
\end{tikzpicture}
\caption{The neighborhood $\mathcal{N}$ of the multiple zero point $p$, obtained by gluing  $n+2$ copy of a rectangle along vertical sides.}
\label{fig:sig}
\end{figure}

To be more precise, let $p$ be an $n$-order zero of a quadratic differential $\Phi$ on the Rieamnn surface $X$.
We choose a  neighhorhood $\mathcal{N}$ of $p$ which is obtained by gluing  $n+2$ copy of a rectangle $R$. As shown on the right of
Figure \ref{fig:sig}, the vertical lines in each rectangle correspond to the vertical leaves of $\Phi$.
We assume that the vertical and horizontal length of $R$  is equal to $a$ and $b$, respectively.

Let $\widetilde{R}$ be another rectangle, with vertical and horizontal length equal to $a+\epsilon$ and $b+\delta$, respectively.
We define $f:R \to \widetilde{R}$ be the affine map, which maps horizontal (vertical) lines to horizontal (vertical) lines.
Note that the quasiconformal
dilatation $K(f)$ is equal to $\frac{1+\epsilon/a}{1+\delta/b}$. In the following, we choose  $\epsilon>0$ and $\delta>0$ sufficiently small and satisfying
$\frac{a+\epsilon}{2b+\delta}=\frac{a}{2b}$.

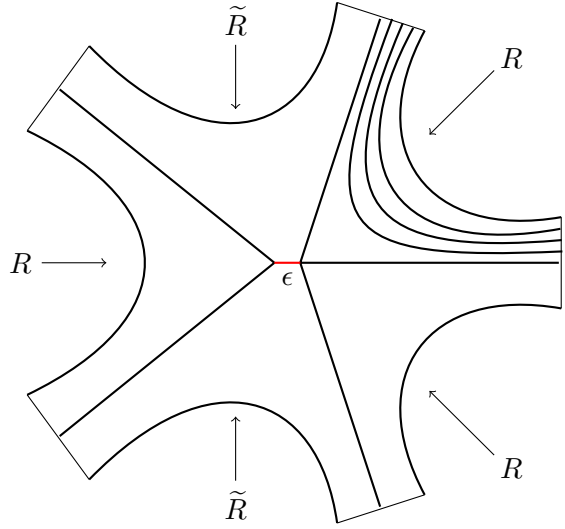
\begin{figure}[htbp]
\centering
\begin{tikzpicture}[scale=1.7]
\path (0,0) coordinate (origin);
\path (-0.2,0) coordinate (Q);
\path (0:2cm) coordinate (P0);
\path (1*72:2cm) coordinate (P1);
\path (2*72:2.3cm) coordinate (P2);
\path (3*72:2.3cm) coordinate (P3);
\path (4*72:2cm) coordinate (P4);
\path (10:2.05cm) coordinate (V1);
\path (62:2.05cm) coordinate (V2);
\path (82:2.05cm) coordinate (V3);
\path (134:2.35cm) coordinate (V4);
\path (154:2.35cm) coordinate (V5);
\path (206:2.35cm) coordinate (V6);
\path (226:2.35cm) coordinate (V7);
\path (-82:2.05cm) coordinate (V8);
\path (-62:2.05cm) coordinate (V9);
\path (-10:2.05cm) coordinate (V10);
\draw [thick] (origin) -- (P0) (origin) -- (P1) (Q) -- (P2) (Q) -- (P3) (origin) -- (P4);
\draw [thick][red]  (Q) -- (origin);
\draw (10:2.05cm) -- (-10:2.05cm);
\draw  (82:2.05cm) -- (62:2.05cm);
\draw  (V4) -- (V5);
\draw  (V6) -- (V7);
\draw  (-62:2.05cm) -- (-82:2.05cm);
\draw [thick] (V1) .. controls  (10:1cm) and (62:1cm) .. (V2);
\draw [thick] (V3) .. controls  (82:1cm) and (134:1cm) .. (V4);
\draw [thick] (V5) .. controls  (154:1cm) and (206:1cm) .. (V6);
\draw [thick] (V7) .. controls  (226:1cm) and (-82:1cm) .. (V8);
\draw [thick] (V9) .. controls  (-62:1cm) and (-10:1cm) .. (V10);
\draw [thick] (5:2.03cm) .. controls  (5:0.5cm) and (67:0.5cm) .. (67:2.03cm);
\draw [thick] (7.5:2.03cm) .. controls  (2.5:0.75cm) and (69.5:0.75cm) .. (64.5:2.03cm);
\draw [thick] (2.5:2.03cm) .. controls  (2.5:0.25cm) and (69.5:0.25cm) .. (69.5:2.03cm);
\draw [->] (1.5,1.5) -- (1,1);
\draw [->] (-0.5, 1.7) -- (-0.5,1.2);
\node [left] at (1.8,1.6) {$R$};
\node [above] at (-0.5, 1.7) {$\widetilde{R}$};
\draw [->] (1.5,-1.5) -- (1,-1);
\node [left] at (1.8,-1.6) {$R$};
\draw [->] (-0.5, -1.7) -- (-0.5,-1.2);
\node [below] at (-0.5, -1.7) {$\widetilde{R}$};
\draw [->] (-2, 0) -- (-1.5,0);
\node [left] at (-2, 0) {$R$};
\node [below] at (-0.1, 0) {$\epsilon$};
\end{tikzpicture}
\caption{The zero $p$ is break up into two lower-order zeros.  The neighhorhood $\widetilde{\mathcal{N}}$ can be identified with  $\mathcal{N}$ up to
scale.}
\label{fig:move}
\end{figure}

The surgery of $\Phi$ in the neighborhood $\mathcal{N}$  is shown in Figure \ref{fig:move}, we can gluing $n$ copy of $R$ and $2$ copy of $\widetilde{R}$ together
along vertical sides
such that the union is a polygon $\widetilde{\mathcal{N}}$. The flat structure on $\widetilde{\mathcal{N}}$ is isometric
to a neighborhood of a quadratic differential. Since $\frac{a+\epsilon}{2b+\delta}=\frac{a}{2b}$,
  $\widetilde{\mathcal{N}}$ can be glue to $X\setminus \mathcal{N}$ using a scaling map $g: \widetilde{\mathcal{N}} \to \mathcal{N}$ (which is holomorphic).
We denote the new flat structure by $\widetilde{\Phi}$. It is a holomorphic quadratic differential on $\widetilde{X}=\widetilde{\mathcal{N}}\sqcup_g (X\setminus \mathcal{N})$ which split the zero $p$ into two distinct lower-order zeros. The vertical foliation of
$\widetilde{\Phi}$ is equivalent to that of $\Phi$ (what we have done is just a Whitehead move).
There is a quasiconformal mapping $F$ between $X$ and $\widetilde{X}$,
which is defined by identity on $X\setminus \mathcal{N}$; and on each copy of $R$, $F$ is defined by $h\circ f$ or $h$. The quasiconformal dilattation
$K(F)$ is the same as $K(f)$. By taking $\epsilon, \delta$ sufficiently small, we can make $K(f)$ close to $1$. This implies that  $\widetilde{X}$
can be taken arbitrary close to $X$ in Teichm\"uller space.

 By doing this surgery inductively, we could obtain a quadratic differential with only
simple zeros, and the quasiconformal dilatation can be controlled very well.
\end{proof}

\begin{remark}
We remark that in some applications, 
the extremal length of simple closed curves that are concerned with are actually real analytic on an appropriate Teichm\"uller space.
For example, the extremal length of any sufficiently symmetric homotopy class of curves on Teichm\"uller of Denjoy domains, studied by Penner \cite{Penner};
and the extremal length some closed curves related to the period problem of Weierstrass representation of minimal surface,
which are real analytic on the Teichm\"uller space of zigzags \cite{WW}. In these examples, the extremal length functions are essentially given by the
Schwarz-Christoffel integrals, whose real analyticity could be seen directly.
\end{remark}

\section{Pluri-subharmonicity of extremal length functions}\label{sec:pluri}

In this section, we prove Theorem \ref{theorem:WP} and Theorem \ref{theorem:main}.
Theorem \ref{theorem:WP} will be restated in the following as Theorem \ref{theorem:sum}.
Theorem \ref{theorem:main} then follows from Theorem \ref{theorem:sum} and the discussion in \S \ref{sec:regularity}, by approximation.

\subsection{Pluri-subharmonicity}
A $C^2$ real-valued function $F$ on a complex manifold $M$ is \emph{(strictly) pluri-subharmonic}  if the \emph{Levi-form} $\partial\overline{\partial}F$ is
(positive) definite at each point of $M$. Recall that $\partial\overline{\partial}F$ is a 2-form defined by
$$\partial\overline{\partial}F=\frac{\partial^2 F}{\partial z^{\alpha}\partial \bar{z}^{\beta}} \ dz^{\alpha} \wedge d\bar{z}^{\beta}$$
in holomorphic coordinates.
If $\xi=\{\xi^{\alpha}\}$ and $\eta=\{\eta^{\alpha}\}$ are tangent vectors to $M$ at a point $z$,
then the value of this 2-form on tangent vectors is
$$\frac{\partial^2 F(z)}{\partial z^{\alpha}\partial \bar{z}^{\beta}}\xi^{\alpha}\bar{\eta}^{\beta}.$$
Since on a complex manifold the transition maps are holomorphic,
the sign of $\frac{\partial^2 F(z)}{\partial z^{\alpha}\partial \bar{z}^{\beta}}\xi^{\alpha}\bar{\xi}^{\beta}$ is independent of the choice of the holomorphic coordinates.
As a consequence, if $\frac{\partial^2 F(z)}{\partial z^{\alpha}\partial \bar{z}^{\beta}}\xi^{\alpha}\bar{\xi}^{\beta}\geq 0$ for any $\xi=\{\xi^{\alpha}\}$, we say that $F$ is  pluri-subharmonic at $z$, and if $\frac{\partial^2 F(z)}{\partial z^{\alpha}\partial \bar{z}^{\beta}}\xi^{\alpha}\bar{\xi}^{\beta} > 0$ for any $\xi=\{\xi^{\alpha}\}\neq 0$, we say that $F$ is strictly pluri-subharmonic.

Pluri-subharmonic function is a natural generalization of subharmonic function of single complex variable,
and it is related to several important notions in several complex variable theory, such as domain of holomorphy and so on.

\begin{remark}
Pulling back a convex functional by a harmonic map, one get a subharmonic function, while
pulling back a pluri-subharmonic functional by a holomorphic map, one get a subharmonic function.
Note that a holomorphic map into a K\"ahler manifold is harmonic.\end{remark}

 Let $\gamma$ be a simple closed curve on $S$ and let $E(\cdot): \mathcal{T}(S) \to \mathbb{R}$
be the energy of equivariant harmonic maps from Riemann surfaces  to the $\mathbb{R}$-tree associated with $\gamma$.
We restate Theorem \ref{theorem:WP} in the following way:

\begin{theorem}\label{theorem:sum}
Let $(S,g)\in \mathcal{T}$ such that $E(\cdot)$ is smooth in a neighborhood of $(S,g)$.
For any two Weil-Petersson geodesics $\mu_k(t),k=1,2$ on $\mathcal{T}(S)$ with $\mu_1(0)=\mu_2(0)=(S,g)$ and $\frac
{d}{dt}|_{t=0}\mu_1(0)={\mu}$, $\frac {d}{dt}|_{t=0}\mu_2(0)=i{\mu}$,
we have
$$\frac{d^2} {dt^2}|_{t=0}E(\mu_1(t))+\frac{d^2}{dt^2}|_{t=0}E(\mu_2(t))>0.$$
\end{theorem}
Note that here we have identified the family of Beltrami differentials $\mu_k (t), k=1,2$
as two Weil-Petersson paths in $\mathcal{T}(S)$,
which are corresponding to two  families of hyperbolic metrics $(S,g_t |dz_t|^2)$
 satisfying $\bar{\partial} z_t =\mu_k(t)\partial z_t, k=1,2 $, respectively.
 The proof of  Theorem \ref{theorem:sum} is given in \S \ref{sec:proof} and \S \ref{sec:strict}.

\subsection{An outline of the argument} The idea is that, by  Theorem \ref{theorem:second},
we can write down the sum $$\frac{d^2} {dt^2}|_{t=0}E(\mu_1(t))+\frac{d^2}{dt^2}|_{t=0}E(\mu_2(t))$$ into
(see \S \ref{sec:proof} below for notation)

$$\{ D^2_{22}E[{\mu}, {\mu}]+D^2_{22}E[i{\mu}, i{\mu}]\}- \{ D^2_{11}E[\dot{\omega}(\mu),
\dot{\omega}(\mu)]+D^2_{11}E[i{\mu}, i{\mu}]\}.$$
Then we apply Equation \eqref{equ11} and the Cauchy-Schwarz inequality to check that $$D^2_{11}E[\dot{\omega}(\mu),
\dot{\omega}(\mu)]+D^2_{22}E[i{\mu}, i{\mu}]\leq D^2_{22}E[{\mu}, {\mu}]+D^2_{22}E[i{\mu}, i{\mu}].$$ The key is that,
although we could not show that
 $D^2_{11}E[\dot{\omega}(\mu),
\dot{\omega}(\mu)]\leq D^2_{22}E[{\mu}, {\mu}]$ or $D^2_{11}E[i{\mu}, i{\mu}]\leq D^2_{22}E[i{\mu}, i{\mu}]$,
we can show that the summation of $D^2_{11}E[\dot{\omega}(\mu),
\dot{\omega}(\mu)]$ and $D^2_{22}E[i{\mu}, i{\mu}]$ will cancel some non-trivial terms.

To see that the above inequality is strict,
we consider the situation when the Cauchy-Schwarz inequality is an equality: by using the fact that
 any holomorphic vector field on a compact Riemann surface is zero, we show that
 in this case the variational vector $\dot\omega$ should be vanished.

\subsection{Proof of  Theorem \ref{theorem:sum}}\label{sec:proof}

Let $\mu_k(t), k=1,2$ be  two Weil-Petersson geodesics satisfying $\mu_1(0)=\mu_2(0)$ and $\frac
{d}{dt}\mu_1(0)={\mu}$, $\frac {d}{dt}\mu_2(0)=i{\mu}$. Corresponding to the two families of hyperbolic metrics,
there are two families of equivariant harmonic maps
whose energy realize the extremal length functions.
We  denote by $\dot{\omega}(\mu)$ and $\dot{\omega}(i\mu)$
 the variational vectors of harmonic maps corresponding to the direction $\mu$ and $i\mu$, respectively.

By our discussion in \S 3, we have
\begin{equation*}\label{equ13}
\begin{cases}
\frac{d^2} {dt^2} |_{t=0}E(\mu_1(t))
=-D^2_{11}E[\dot{\omega}(\mu),
\dot{\omega}(\mu)]+D^2_{22}E[{\mu}, {\mu}],\\
\\
\frac{d^2} {dt^2} |_{t=0} E(\mu_2(t))
=-D^2_{11}E[\dot{\omega}(i\mu),
\dot{\omega}(i\mu)]+D^2_{22}E[i{\mu}, i{\mu}].
\end{cases}
\end{equation*}

Applying Theorem \ref{theorem:second}, we have
\begin{equation*}
\begin{cases}
D^2_{22}E[{\mu}, {\mu}]
=2 \int_S |\mu|^2(|\omega_z|^2+|\omega_{\bar z}|^2) \ dzd \bar z, \\
\\
D^2_{22}E[\dot{\omega}(\mu),\dot\omega(\mu)]=2\int_S |\frac{\partial \dot \omega(\mu)}{\partial z}|^2 \ dzd \bar z.
\end{cases}
\end{equation*}
In the second equation, $\frac{\partial \dot \omega(\mu)}{\partial z}$ denotes the partial derivative
of $\dot \omega(\mu)$ with respect to $z$ (while in previous sections, when $\dot\omega=\dot\omega(\mu)$,
 it was denoted by $\dot\omega_z$ for simplicity). The expressions for $D^2_{11}E[\dot{\omega}(i\mu),
\dot{\omega}(i\mu)]$ and $D^2_{22}E[i{\mu}, i{\mu}]$ are similar.

As a result, we have
\begin{eqnarray}\label{equ14}
&& \frac{d^2} {dt^2}|_{t=0}E(\mu_1(t))+\frac{d^2} {dt^2}|_{t=0}E(\mu_2(t)) \nonumber\\
&=& 4 \int_S |\mu|^2(|\omega_z|^2+|\omega_{\bar z}|^2)dzd \bar z \\
&& - 2\int_S |\frac{\partial \dot \omega(\mu)}{\partial z}|^2 dzd \bar z-2\int_S |\frac{\partial \dot \omega(i\mu)}{\partial z}|^2 dzd \bar z. \nonumber
\end{eqnarray}

By equation $(\ref{equ11})$, we have
\begin{eqnarray*}
\int_S |\frac{\partial \dot \omega(\mu)}{\partial z}|^2 dzd\bar
z=\int_S \frac{\partial \overline{\dot \omega(\mu)}}{\partial z} \mu\omega_z+
\int_S \frac{\partial \overline{\dot \omega(\mu)}}{\partial \bar z} \overline{\mu}\omega_{\bar z}dzd \bar
z ,
\end{eqnarray*}
and
\begin{eqnarray*}
\int_S |\frac{\partial \dot \omega(i\mu)}{\partial z}|^2 dzd\bar
z&=&\int_S \frac{\partial \overline{\dot \omega(i\mu)}}{\partial z}  (i\mu) \omega_z+
\int_S \frac{\partial \overline{\dot \omega(i\mu)}}{\partial \bar z} \overline{(i\mu)}\omega_{\bar z}dzd \bar z \\
&=&\int_S i \frac{\partial \overline{\dot \omega(i\mu)}}{\partial z}  \mu \omega_z+
\int_S  -i\frac{\partial \overline{\dot \omega(i\mu)}}{\partial \bar z}\overline{\mu}\omega_{\bar z}dzd \bar z.
\end{eqnarray*}

Since $\omega$ is real, we have
\begin{eqnarray*}
&& 2\int_S  |\frac{\partial \dot \omega(\mu)}{\partial z}|^2 dzd\bar z+2\int |\frac{\partial \dot \omega(i\mu)}{\partial z}|^2 dzd \bar z \\
&=& 2\int_S \mu\omega_z[\frac{\partial \overline{\dot \omega(\mu)}}{\partial z}
+i\frac{\partial \overline{\dot \omega(i\mu)}}{\partial z}]dzd \bar
z+2\int_S \bar{\mu}\omega_{\bar
z}[\frac{\partial \overline{\dot \omega(\mu)}}{\partial \bar z}-i\frac{\partial \overline{\dot \omega(i\mu)}}{\partial \bar z}]dzd
\bar z \\
&\leq&  2\int_S  |\mu|^2(|\omega_z|^2+|\omega_{\bar z}|^2)dzd \bar z\\
&& + \frac{1}{2}\int_S |\frac{\partial \overline{\dot \omega(\mu)}}{\partial  z}+i\frac{\partial \overline{\dot \omega(\mu)}}{\partial  z}|^2dzd \bar
z+\frac{1}{2}\int_S |\frac{\partial \overline{\dot \omega(\mu)}}{\partial \bar z}-i\frac{\partial \overline{\dot \omega(i\mu)}}{\partial \bar z}|^2dzd \bar z \\
&=& 2\int_S  |\mu|^2(|\omega_z|^2+|\omega_{\bar z}|^2)dzd \bar z
+\int_S  |\frac{\partial {\dot \omega(\mu)}}{\partial z}|^2 dz d \bar z+\int_S  |\frac{\partial {\dot \omega(i\mu)}}{\partial  z}|^2 dzd \bar z \\
 && -i \  \mathrm{Re} \big( \int_S \frac{\partial \overline{\dot \omega(\mu)}}{\partial  z}
 \frac{\partial {\dot \omega(i\mu)}}{\partial \bar z} dzd \bar z
- \int_S \frac{\partial \overline{\dot \omega(\mu)}}{\partial  \bar z}
 \frac{\partial {\dot \omega(i\mu)}}{\partial z}dzd \bar z\big). \label{equ}
\end{eqnarray*}
In the above computation, we have applied the following inequality:
$$2\int_S |fg|dz d\bar z\leq 2\int |f|^2 dz d\bar z+ \frac{1}{2}\int |g|^2dz d\bar z.$$

It follows from integration by
part that

 \begin{eqnarray*}
&& \int_S \frac{\partial \overline{\dot \omega(\mu)}}{\partial  z}
 \frac{\partial {\dot \omega(i\mu)}}{\partial \bar z} dzd \bar z
-  \int_S \frac{\partial \overline{\dot \omega(\mu)}}{\partial  \bar z}
 \frac{\partial {\dot \omega(i\mu)}}{\partial z}dzd \bar z  \\
 &=&-\int_S \frac{\partial^2 \overline{\dot \omega(\mu)}}{\partial  z\partial \bar z}
{{\dot \omega(i\mu)}} dzd \bar z
+ \int_S \frac{\partial^2 \overline{\dot \omega(\mu)}}{\partial z \partial  \bar z}
{\dot \omega(i\mu)}dzd \bar z  \\
&=& 0.
 \end{eqnarray*}

 Therefore we have
\begin{eqnarray*}
&& 2\int_S  |\frac{\partial \dot \omega(\mu)}{\partial z}|^2 dzd\bar z+2\int |\frac{\partial \dot \omega(i\mu)}{\partial z}|^2 dzd \bar z \\
 &\leq& 2\int_S  |\mu|^2(|\omega_z|^2+|\omega_{\bar z}|^2)dzd \bar z
+\int_S  |\frac{\partial {\dot \omega(\mu)}}{\partial z}|^2 dz d \bar z+\int_S  |\frac{\partial {\dot \omega(i\mu)}}{\partial  z}|^2 dzd \bar z.
\end{eqnarray*}
  Then
\begin{eqnarray*}
&& \int_S  |\frac{\partial \dot \omega(\mu)}{\partial z}|^2 dzd\bar z+2\int |\frac{\partial \dot \omega(i\mu)}{\partial z}|^2 dzd \bar z \\
&\leq& 2\int |\mu|^2(|\omega_z|^2+|\omega_{\bar z}|^2)dzd \bar z.
\end{eqnarray*}
Combined with $(\ref{equ14})$, we conclude that
\begin{equation}\label{equ:plurisub}
\frac{d^2} {dt^2}|_{t=0}E(\mu_1(t))+\frac{d^2} {dt^2}|_{t=0}E(\mu_2(t))\geq 0.
\end{equation}

\subsection{Strict inequality}\label{sec:strict}

Next we show that the left hand side of $(\ref{equ:plurisub})$ is actually positive.

As we have mentioned above, the Jacobian
$|\omega_z|^2-|\omega_{\bar z}|^2=0.$  Moreover we can take
$\omega$ to be real and then $\omega_{\bar
z}=\overline{\omega_z},\dot\omega_{\bar z}=\overline
{\dot\omega_z}.$

Note that in the proof of pluri-subharmonicity, we have applied the Schwarz inequality to show that
\begin{eqnarray*}
&& 2\int_S  |\frac{\partial \dot \omega(\mu)}{\partial z}|^2 dzd\bar z+2\int |\frac{\partial \dot \omega(i\mu)}{\partial z}|^2 dzd \bar z \\
&=& 2\int_S \mu\omega_z[\frac{\partial \overline{\dot \omega(\mu)}}{\partial z}
+i\frac{\partial \overline{\dot \omega(i\mu)}}{\partial z}]dzd \bar
z+2\int_S \bar{\mu}\omega_{\bar
z}[\frac{\partial \overline{\dot \omega(\mu)}}{\partial \bar z}-i\frac{\partial \overline{\dot \omega(i\mu)}}{\partial \bar z}]dzd
\bar z \\
&&  \leq2\int_S  |\mu|^2(|\omega_z|^2+|\omega_{\bar z}|^2)dzd \bar z\\
&& +\frac{1}{2}\int_S |\frac{\partial \overline{\dot \omega(\mu)}}{\partial  z}+i\frac{\partial \overline{\dot \omega(i\mu)}}{\partial  z}|^2dzd \bar
z+\frac{1}{2}\int_S |\frac{\partial \overline{\dot \omega(\mu)}}{\partial \bar z}-i\frac{\partial \overline{\dot \omega(i\mu)}}{\partial \bar z}|^2dzd \bar z \\
&=& 2\int_S  |\mu|^2(|\omega_z|^2+|\omega_{\bar z}|^2)dzd \bar z
+\int_S  |\frac{\partial {\dot \omega(\mu)}}{\partial z}|^2 dz d \bar z+\int_S  |\frac{\partial {\dot \omega(i\mu)}}{\partial \bar z}|^2 dzd \bar z \\
\end{eqnarray*}

We denote the above inequality by $(\ast)$. Obviously,  if the inequality $(*)$ is a strictly inequality,
 then by formula (11)  the extremal length is strictly pluri-subharmonic.
Thus we assume that the inequality $(*)$ is an equality.

If $\omega_z=0$ at a point, then $\omega_{\bar z}=0 $  at this point too. Let $S_0=\{z \in S: \omega_z=0\}$. Then the integration
 $$\int_{S_0} |\mu|^2(|\omega_z|^2+|\omega_{\bar
z}|^2)dzd \bar z=0,$$
$$\int_{S_0} |\frac{\partial \dot \omega(\mu)}{\partial z}|^2 dzd\bar
z=\int_{S_0} \frac{\partial \overline{\dot \omega(\mu)}}{\partial z} \mu\omega_z+
\int_{S_0} \frac{\partial \overline{\dot \omega(\mu)}}{\partial \bar z}\bar \mu \omega_{\bar z}dzd \bar
z=0,$$
 and
 $$\int_{S_0} |\frac{\partial \dot \omega(i\mu)}{\partial z}|^2 dzd\bar z=0.$$

 As a result,
 the points in $S_0$  have no  contribution to the  inequality $(*)$
and the inequality
\begin{eqnarray*}
&& \int_S  |\frac{\partial \dot \omega(\mu)}{\partial z}|^2 dzd\bar z+2\int |\frac{\partial \dot \omega(i\mu)}{\partial z}|^2 dzd \bar z \\
&\leq& 2\int_S |\mu|^2(|\omega_z|^2+|\omega_{\bar z}|^2)dzd \bar z.
\end{eqnarray*}

Therefore, we only need to consider the   inequality $(*)$ in
any neighborhood $\mathcal{U}\subset S$ where $\omega_z\neq0$.

Suppose that $\omega_z\neq0 $ in $\mathcal{U}$, Let $$\tau(z)=\frac
{\dot\omega(\mu)-i\dot\omega(i\mu)}{\omega_z}.$$
Since
$\omega$ is real, we have
$$\overline{\tau}(z)=\frac
{\dot\omega(\mu)+i\dot\omega(i\mu)}{\omega_{\bar z}}.$$
Since $\omega_{z\bar z}=0$, we have
$$\tau_{\bar z} = \left( \frac{\partial \dot \omega(\mu)}{\partial \bar z}-i \frac{\partial \dot \omega(i\mu)}{\partial \bar z} \right) / {\omega_z} $$
and $$
\bar\tau_z=\left( \frac{\partial \dot \omega(\mu)}{\partial  z}+ i \frac{\partial \dot \omega(i\mu)}{\partial z} \right)/ {\omega_{\bar z}}.$$
Note that the inequality $(*)$ becomes equality only if
$$\tau_{\bar z}\omega_z=f_1(z)\bar\mu\omega_z , \bar\tau_z\omega_{\bar z}=f_2(z)
\bar\mu\omega_{\bar z},
$$
where $f_1(z)$ and $f_2(z)$ are real-valued
functions defined on $\mathcal{U}$.

Assume that the inequality $(*)$ becomes equality. Since $\omega_z\neq0$ and $\omega_{\bar z}\neq0$ in $\mathcal{U}$, we have
$\tau_{\bar z}=f_1(z)\bar\mu , \bar\tau_z=f_2(z) \bar\mu.$
In this case we claim that $\tau_{\bar z}=0$ in $\mathcal{U}$.

In fact,
$$\tau_{\bar z}=f_1(z)\bar\mu=f_2(z)\mu.$$
By assumption, $\mu=\frac{\bar \Phi}{g}$ where $\Phi$ is a holomorphic quadratic differential.
If $z$ is a point in $\mathcal{U}$ such that $f_1(z)\neq 0$ (and then $f_2(z)\neq 0$), then we have
$$\frac{\Phi^2(z)}{|\Phi(z)|^2}=\frac{\Phi(z)}{\bar \Phi(z)}=\frac{f_2(z)}{f_1(z)}.$$
We claim that $z$ should be a zero point of $\Phi$. Otherwise, the value of $\Phi^2(z)$ is real in a neighborhood of $z$, which is impossible
(note that $\Phi^2(z)$ is holomorphic). As a result, $\tau_{\bar z}=0$ in $\mathcal{U}$.

Since the Hopf differential of the harmonic map $\omega$
is non-trivial, $\omega_z\neq 0$ on the Riemann surface except  a finite set. We can cover the Riemann surface $(S,g)$
by a family of open sets $\mathcal{U}$ satisfying $\omega_z\neq 0$ for $z\in \mathcal{U}$. It turns out that
 $\tau_{\bar z}=0$ on the Riemann surface, that is, $\tau$ is a holomorphic vector field on $(S,g)$ (since
 $\omega_z dz$ is a differential, $\tau dz^{-1}$ is a negative differential on $(S,g)$).
It is well known that the only holomorphic vector field is zero, thus $\tau\equiv 0$.
This implies that $\dot\omega(\mu)=\dot\omega(i\mu)\equiv 0$.
In this case we have (recall the first equality in $(*)$)
$$0=\int_{\mathcal{U}}  |\frac{\partial \dot \omega(\mu)}{\partial z}|^2 dzd\bar z+\int_{\mathcal{U}} |\frac{\partial \dot \omega(i\mu)}{\partial z}|^2 dzd \bar z < 2\int_{\mathcal{U}}
|\mu|^2(|\omega_z|^2+|\omega_{\bar z}|^2)dzd \bar z.$$

By formula \eqref{equ14}, we have proved the strictly inequality in Theorem \ref{theorem:sum}.

\subsection{Pluri-subharmonicity of extremal length}
In general, an upper semi-continuous function $F$ on a complex manifold $M$ is said to be {pluri-subharmonic}
 if  for any holomorphic map $\phi: \Delta \to M$, the function
$F\circ \phi: \Delta \to M$
is subharmonic, where  $\Delta$ denotes the unit disk. This is equivalent to
the property

$$F\circ \phi (a)\leq \frac{1}{2\pi} \int_{0}^{2\pi} F\circ \phi(a+r e^{i\theta}) \ d \theta, \forall \ a\in \Delta, r< 1-|a|.$$
In the sense of  de Rham, $d d^c F$ defines a positive $(1, 1)$-current on $M$.

Theorem \ref{theorem:main} follows from Theorem \ref{theorem:WP} and Lemma \ref{lem:denseopen}, combined with the
following theorem.

\begin{theorem}\label{thm:foliation}
Given any measured foliation $\mathcal{F}$ on $S$, the extremal length function $\mathrm{Ext}_{\mathcal{F}}(\cdot)$ is pluri-subharmonic on $\mathcal{T}(S)$.
\end{theorem}
\begin{proof}
Let $X\in \mathcal{T}(S)$.
We  first assume that $\mathcal{F}$ is equivalent to a complete measured geodesic lamination.
(A geodesic lamination is \emph{complete} if its complement are disjoint union of ideal triangles).
In this case the Hubbard-Masur differential of $\mathcal{F}$ always has only simple zeros.
Then we take a sequence of weighed simple closed curves $\{a_n\gamma_n\}$ to approximate $\mathcal{F}$.
When $n$ is sufficiently large, the quadratic differential $\Phi_n$ on $X$ with vertical foliations equivalent
to $\{a_n\gamma_n\}$ also has only simple zeros in a neighborhood of $X$. As a result, the extremal length of
$a_n\gamma_n$ is smooth near $X$. By Theorem \ref{theorem:sum}, the extremal length of
$a_n\gamma_n$ is strictly pluri-subharmonic at $X$. By taking a limit, we know that  $\mathrm{Ext}_{\mathcal{F}}(\cdot)$ defines a pluri-subharmonic
function in a neighborhood of $X$. Since $X$ is arbitrary, we conclude that  $\mathrm{Ext}_{\mathcal{F}}(\cdot)$ is pluri-subharmonic.

In general, note that the subset of essentially complete measured geodesic lamination is dense in $\mathcal{MF}$. By approximation,
it follows that the extremal length function of any measured foliation  is pluri-subharmonic on $\mathcal{T}(S)$, in the sense of current.

\end{proof}

\begin{remark}
Using a different method, Vasil'ev \cite{Vasi} showed
that the extremal length  of some maximally rational measured foliations
are locally harmonic in $\mathcal{T}(S)$.

As we have observed in \S \ref{sec:regularity}, for any simple closed curve $\gamma$ on $S$,
there is an open dense subset $\mathcal{T}_\gamma$ of $\mathcal{T}(S)$ on which the extremal length $\mathrm{Ext}_{\gamma}(\cdot)$ is
strictly pluri-subharmonic. It would be interesting to understand the complement of  $\mathcal{T}_\gamma$.
\end{remark}

\section{Some Applications}\label{sec:application}
We  obtain a new proof of the following result of Bers-Ehrenpreis \cite{Bers} (see also \cite{Tromba,Wolpert87}).
\begin{corollary}
Teichm\"uller space is a Stein manifold.
\end{corollary}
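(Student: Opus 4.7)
The plan is to deduce the Stein property from a classical criterion: by Grauert's solution of the Levi problem (in the smooth form due to Narasimhan), a complex manifold is Stein if and only if it carries a smooth strictly plurisubharmonic exhaustion function. Since $\mathcal{T}(S)$ is known to be a complex manifold, it suffices to assemble such an exhaustion out of the functions $\mathrm{Ext}_{\gamma}(\cdot)$ which have just been shown to be strictly plurisubharmonic.

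The construction I would use is the following. Pick a finite collection of essential simple closed curves $\gamma_1, \dots, \gamma_k$ that \emph{fill} $S$, in the sense that $i(\gamma,\gamma_j) > 0$ for some $j$ whenever $\gamma$ is an essential simple closed curve on $S$ (for instance, a pants decomposition together with a dual system). Set
$$F(X) \;=\; \sum_{j=1}^{k} \mathrm{Ext}_{\gamma_j}(X), \qquad X \in \mathcal{T}(S).$$
Each summand is smooth (real analytic) and strictly plurisubharmonic on $\mathcal{T}(S)$ by Theorem~\ref{theorem:sp}, so $F$ inherits both properties. What remains is to show $F$ is an exhaustion, i.e.\ that every sublevel set $\{F \le C\}$ is relatively compact in $\mathcal{T}(S)$.

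For the exhaustion property, I would argue by contradiction combined with Mumford's compactness theorem. Suppose $F(X_n) \le C$ along a sequence $X_n$ leaving every compact set of $\mathcal{T}(S)$. By Mumford there is some essential simple closed curve $\alpha$ with hyperbolic length $\ell_\alpha(X_n) \to 0$, and by the Maskit/Kerckhoff comparison between hyperbolic and extremal length in the thin part this forces $\mathrm{Ext}_\alpha(X_n) \to 0$. On the other hand the elementary Cauchy--Schwarz type inequality
$$i(\alpha,\beta)^2 \;\le\; \mathrm{Ext}_\alpha(X)\,\mathrm{Ext}_\beta(X),$$
applied with $\beta = \gamma_j$, gives $i(\alpha,\gamma_j)^2 \le C\,\mathrm{Ext}_\alpha(X_n) \to 0$ for each $j$. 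Hence $i(\alpha,\gamma_j) = 0$ for all $j$, contradicting the filling hypothesis. Consequently $F$ is proper, hence a smooth strictly plurisubharmonic exhaustion, and Narasimhan's criterion finishes the proof.

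The main technical point is really the exhaustion step; the plurisubharmonicity is purchased entirely from Theorem~\ref{theorem:sp}, so nothing new needs to be computed there. The existence of a finite filling system and the two inputs I rely on — Mumford compactness and the intersection/extremal length inequality — are standard, so I expect no genuine obstacle beyond citing them cleanly. A minor subtlety worth noting is that one could bypass the filling finite system and instead use a countable dense family $\{\gamma_i\}_{i\ge 1}$ with $F(X) = \sum c_i \mathrm{Ext}_{\gamma_i}(X)$ for rapidly decaying $c_i$, but the finite filling version is shorter and avoids convergence issues of derivatives.
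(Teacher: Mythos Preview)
Your overall strategy coincides with the paper's: sum the extremal lengths of a finite filling system of curves, use Theorem~\ref{theorem:sp} for strict plurisubharmonicity, verify properness, and invoke the Stein criterion. On that level the two proofs are the same.

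There is, however, a genuine gap in your properness step. You claim that if $X_n$ leaves every compact set of $\mathcal{T}(S)$ then ``by Mumford'' some curve $\alpha$ has $\ell_\alpha(X_n)\to 0$. Mumford's compactness is a statement about \emph{moduli} space; divergence in $\mathcal{T}(S)$ does not by itself force any curve to become short. The Dehn-twist orbit $X_n=T_\beta^n(X_0)$ leaves every compact subset of $\mathcal{T}(S)$ while the systole stays bounded away from zero. Your argument as written never uses the hypothesis $F(X_n)\le C$ to exclude this kind of divergence, so step~2 of your contradiction is unjustified.

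The paper closes this gap without Mumford or the intersection inequality. Testing the analytic definition of extremal length with the hyperbolic metric gives
\[
\mathrm{Ext}_{\gamma_j}(X)\ \ge\ \frac{\ell_{\gamma_j}(X)^2}{2\pi|\chi(S)|},
\]
so $F(X)\ge (2\pi|\chi(S)|)^{-1}\sum_j \ell_{\gamma_j}(X)^2$. The right-hand side is proper on $\mathcal{T}(S)$ whenever the $\gamma_j$ fill (cited to Kerckhoff~\cite{Kerckhoff83}), and properness of $F$ follows immediately. If you prefer to keep your route, the same inequality shows that $F(X_n)\le C$ bounds each $\ell_{\gamma_j}(X_n)$, and then properness of the filling length sum already gives the contradiction---at which point the Mumford/intersection-number detour is no longer needed.
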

\begin{proof}
Let $\{\gamma_i\}$ be a finite set of simple closed curves filling the surface $S$ and set
$$L(\cdot)=\sum_{\gamma_i}\mathrm{Ext}_{\gamma_i}(\cdot).$$
By definition of extremal length, $L(\cdot)$ has a universal lower bound
\begin{equation}\label{equ:lower}
\sum_{\gamma_i}{\ell^2_{\gamma_i}(\cdot)}/{2\pi|\chi(S)|},
\end{equation}
where $\ell_{\gamma_i}(\cdot)$ denotes the hyperbolic length of $\gamma_i$.
Since $(\ref{equ:lower})$ is a proper function on $\mathcal{T}(S)$ (see Kerckhoff \cite{Kerckhoff83}), $L(\cdot)$ is also proper.
It follows from Theorem \ref{theorem:main} that $L(\cdot)$ defines a proper plurisubharmonic function on $\mathcal{T}(S)$. This implies that $\mathcal{T}(S)$ is a Stein manifold.

\end{proof}

\subsection{Relation with the Teichm\"uller metric}
In the following, we apply $(\ref{equ11})$ to present an inequality about the second variation of $E(\omega^t,g_t)=\mathrm{Ext}_\gamma(g_t)$ along a Teichm\"uller geodesic $(S,g_t)$ (note that for a Teichm\"uller geodesic, we can assume that the Beltrami differentials
of the quasiconformal mappings $(S,g_0)\to (S,g_t)$ satisfy $\mu(t)=t\mu$, where $\|\mu\|_{\infty}=1$).
As before, we assume that $\mathrm{Ext}_\gamma(g_t)$ is smooth.

\begin{proposition}\label{pro:second}
Let $(S,g_t)$ be a Teichm\"uller geodesic in
$\mathcal{T}(S)$ with Beltrami differential $\mu(t)=t\mu$, where $\|\mu\|_{\infty}=1.$ Then the following inequality holds:
\begin{eqnarray*}
\frac{d^2} {dt^2}|_{t=0}E(\omega^t,g_t)
&\geq&  -2 \int_S (|\omega_z|^2 +|\omega_{\bar z}|^2)dzd \bar z \\
&=& -4 \int_S |\omega_z|^2 dzd \bar z=-4E(\omega,g_0).
\end{eqnarray*}
\end{proposition}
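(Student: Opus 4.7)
The plan is to combine the second-variation formula from Theorem \ref{theorem:second} with the integral identity (\ref{equ11}) of Section \ref{sec:field} and run Cauchy--Schwarz with carefully matched constants. Since the linear path $\mu(t) = t\mu$ satisfies $\ddot{\mu} \equiv 0$, the derivation of Theorem \ref{theorem:second} applies verbatim (it only used $\ddot\mu \equiv 0$, not the harmonic-representative condition $\mu = \bar\Phi/g$), and together with $|\omega_z| = |\omega_{\bar z}|$ (which holds because $\omega$ maps to an $\mathbb{R}$-tree) gives
$$\frac{d^2}{dt^2}\Big|_{t=0} E(\omega^t, g_t) \;=\; 4\int_S |\mu|^2 |\omega_z|^2 \, dzd\bar z \;-\; 2\int_S |\dot\omega_z|^2 \, dzd\bar z.$$
The whole problem thus reduces to bounding $\int_S |\dot\omega_z|^2$ from above in terms of $\int_S |\mu|^2 |\omega_z|^2$.

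For this upper bound I would exploit (\ref{equ11}),
$$\int_S |\dot\omega_z|^2 \, dzd\bar z \;=\; \int_S \mu\, \omega_z\, \overline{\dot\omega}_z\,dzd\bar z \;+\; \int_S \bar\mu\, \omega_{\bar z}\, \overline{\dot\omega}_{\bar z}\,dzd\bar z.$$
Taking $\omega$ (and hence $\dot\omega$) real-valued into $\mathbb{R}\cong T$ gives $\overline{\dot\omega_z} = \dot\omega_{\bar z}$, so $|\dot\omega_z| = |\dot\omega_{\bar z}|$; together with $|\omega_z| = |\omega_{\bar z}|$, pointwise absolute values and the Cauchy--Schwarz inequality yield
$$\int_S |\dot\omega_z|^2 \;\leq\; 2\Bigl(\int_S |\mu|^2 |\omega_z|^2\Bigr)^{1/2}\Bigl(\int_S |\dot\omega_z|^2\Bigr)^{1/2},$$
which rearranges to the crucial estimate $\int_S |\dot\omega_z|^2 \leq 4\int_S |\mu|^2 |\omega_z|^2$.

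Plugging this into the second-variation formula and then invoking the normalization $\|\mu\|_\infty = 1$ (so $|\mu|^2 \leq 1$ almost everywhere) produces
$$\frac{d^2}{dt^2}\Big|_{t=0} E(\omega^t, g_t) \;\geq\; 4\int_S |\mu|^2 |\omega_z|^2 - 8\int_S |\mu|^2 |\omega_z|^2 \;=\; -4\int_S |\mu|^2 |\omega_z|^2 \;\geq\; -4\int_S |\omega_z|^2 \, dzd\bar z,$$
which equals $-2\int_S(|\omega_z|^2 + |\omega_{\bar z}|^2)\,dzd\bar z = -4\,E(\omega, g_0)$, as claimed.

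The only real delicacy is the Cauchy--Schwarz bookkeeping: the factor $2$ coming from summing the two equal-modulus terms in (\ref{equ11}) must, after the absorb-and-solve step, produce exactly the constant $4$ — so that the resulting bound $4\int |\mu|^2 |\omega_z|^2$ on the map-variation term is exactly twice the positive metric-variation term, leaving the sharp residual $-4\int |\mu|^2 |\omega_z|^2$ that the Teichm\"uller normalization $\|\mu\|_\infty \leq 1$ then converts into $-4\,E(\omega, g_0)$. Everything else is either elementary or imposed by hypothesis.
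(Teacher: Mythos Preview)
Your proof is correct and follows essentially the same route as the paper: start from Theorem~\ref{theorem:second}, use identity~(\ref{equ11}) together with $|\omega_z|=|\omega_{\bar z}|$ and $|\dot\omega_z|=|\dot\omega_{\bar z}|$ to bound $\int_S|\dot\omega_z|^2\le 4\int_S|\mu|^2|\omega_z|^2$, and then invoke $\|\mu\|_\infty=1$. The only cosmetic difference is that the paper obtains this bound via the pointwise Young inequality $ab\le\tfrac12 a^2+\tfrac12 b^2$ and then absorbs, whereas you use the integral Cauchy--Schwarz inequality and absorb; the resulting constant $4$ and the remainder of the argument are identical.
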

\begin{proof}
It is not hard to see that the discussions in \S \ref{sec:second} and \S \ref{sec:field} apply to
second variations along Teichm\"uller
geodesics.
As before, we can assume that $\omega$ is real, and it follows that $\dot\omega_z=\overline{\dot\omega}_z$. By $(\ref{equ11})$, we have
\begin{eqnarray*}
\int|\dot \omega_z|^2 dzd \bar z
&=&|\int_S \overline{\dot\omega}_z \mu\omega_z+
\int_S \overline{\dot\omega_z}\overline{\mu}\omega_{\bar z}dzd \bar
z| \\
&\leq& \int_S |\overline{\dot\omega}_z \mu\omega_z|dzd \bar z+
\int_S |\overline{\dot\omega_z}\overline{\mu}\omega_{\bar z}|dz d
\bar z \\
&=&\int_S   |\dot\omega_z||\mu|(|\omega_z|+|\omega_{\bar z}|)dzd \bar z \\
&\leq& \frac 1 2 \int_S |\dot\omega_z|^2dzd \bar z
+\frac 1 2 \int_S |\mu|^2(|\omega_z|+|\omega_{\bar z}|)^2 dzd \bar
z.
\end{eqnarray*}
Then
$$\frac 1 2 \int|\dot\omega_z|^2dzd \bar z
\leq \frac 1 2 \int|\mu|^2(|\omega_z|+|\omega_{\bar z}|)^2 dzd
\bar z.$$ Equivalently,
$$\int_S |\dot\omega_z|^2dzd \bar z
\leq \int_S |\mu|^2(|\omega_z|+|\omega_{\bar z}|)^2 dzd \bar z.$$ As
the image of $\omega$ is $\mathbb{R}$-tree, the Jacobian is zero. We have
$$0=|\omega_z|^2 -|\omega_{\bar z}|^2.$$
Then

\begin{eqnarray*}
\int_S |\dot\omega_z|^2dzd \bar z &\leq& \int_S |\mu|^2(2|\omega_z|)^2 dzd \bar z \\
&\leq& 4 \int_S |\mu|^2|\omega_z|^2 dzd \bar z \\
&=& 2 \int_S |\mu|^2(|\omega_z|^2+|\omega_{\bar z} |^2)dzd \bar z.
\end{eqnarray*}
Combining the above inequality with Theorem \ref{theorem:second}, we have

\begin{eqnarray*}
\frac{d^2} {dt^2}|_{t=0}E(\omega^t,g_t)&=&\frac{d^2} {dt^2}|_{t=0}E(\omega,g_t)
-\frac{d^2} {dt^2}|_{t=0}E(\omega^t,g_0) \\
&=& 2 \int_S |\mu|^2(|\omega_z|^2+|\omega_{\bar z}|^2)dzd \bar z
-2\int_S |\dot \omega_z|^2 dzd \bar z \\
&\geq& 2 \int_S |\mu|^2 (|\omega_z|^2 +|\omega_{\bar z}|^2)dzd \bar z \\
&& -4\int_S |\mu|^2 (|\omega_z|^2 +|\omega_{\bar z}|^2)dzd \bar z \\
&=& -2 \int_S |\mu|^2 (|\omega_z|^2 +|\omega_{\bar z}|^2)dzd \bar z.
\end{eqnarray*}
By assumption, $\|\mu\|_\infty=1$, thus
$$\frac{d^2} {dt^2}|_{t=0}E(\omega^t,g_t)\geq -2 \int_S  (|\omega_z|^2 +|\omega_{\bar z}|^2)dzd \bar z=-4E(\omega,g_0).$$
\end{proof}

 It was a long open problem that whether Teichm\"uller geodesic balls are convex. 
  Kerckhoff \cite{Kerckhoff} has discovered an elegant and useful way to
compute the Teichm\"{u}ller distance in terms of extremal length.
Lenzhen and Rafi \cite{LR}
proved that  extremal length functions are quasi-convex along any Teichm\"uller geodesic.
As a corollary, they proved the quasi-convexity of Teichm\"uller geodesic balls.
 Recently, Bourque and Rafi \cite{BR} announced non-convexity of Teichm\"uller metric balls.
 It is interesting to contrast the above result of Lenzhen-Rafi to a corollary of Chaika-Masur-Wolf \cite{CMW} that
the hyperbolic length can increase and decrease along Teichm\"uller geodesics.
We hope that our study of extremal length variations may be applied in the study of the convexity of Teichm\"uller metric.

\end{document}